\numberwithin{figure}{section}
\numberwithin{table}{section}
\numberwithin{equation}{section}
\newenvironment{abstr}[1]{ \vspace{.05in}\footnotesize
       \parindent .2in
         {\upshape\bfseries #1. }\ignorespaces}{\par\vspace{.1in}}
\newenvironment{Abstract}{\begin{abstr}{Abstract}}{\end{abstr}}
\newenvironment{keywords}{\begin{abstr}{Key words}}{\end{abstr}}
\newenvironment{AMS}{\begin{abstr}{AMS subject classifications}}{\end{abstr}}
\newtheorem{theorem}{Theorem}[section]
\newtheorem{lemma}[theorem]{Lemma}
\newtheorem{proposition}[theorem]{Proposition}
\newtheorem{conclusion}[theorem]{Conclusion}
\theoremstyle{definition}
\newtheorem{definition}[theorem]{Definition}
\newtheorem{assumption}[theorem]{Assumption}
\newtheorem{remark}[theorem]{Remark}
 \newcommand{\quotes}[1]{``#1''}
\DeclareMathOperator{\supp}{supp}
\DeclareMathOperator{\Int}{int}
\DeclareMathOperator{\diam}{diam}
\DeclareMathOperator{\Div}{div}
\DeclareMathOperator{\curl}{curl}
\DeclareMathOperator{\ol}{ol}
\DeclareMathOperator{\ms}{ms}
\newcommand{\nz}{\mathbb{N}}       
\newcommand{\rz}{\mathbb{R}}       
\newcommand{\cz}{\mathbb{C}}       
\newcommand{\pre}{\mathrm{pre}}
\newcommand\Va{\mathbf{a}}
\newcommand\Vb{\mathbf{b}}
\newcommand\Vf{\mathbf{f}}
\newcommand\Vn{\mathbf{n}}
\newcommand\Vt{\mathbf{t}}
\newcommand\Vv{\mathbf{v}}
\newcommand\Vu{\mathbf{u}}
\newcommand\Vw{\mathbf{w}}
\newcommand\Vx{\mathbf{x}}
\newcommand\Vz{\mathbf{z}}
\newcommand\VE{\mathbf{E}}
\newcommand\VH{\mathbf{H}}
\newcommand\VW{\mathbf{W}}
\newcommand\Vpsi{\boldsymbol{\psi}}
\newcommand\Vphi{\boldsymbol{\phi}}
\newcommand\Vtau{\boldsymbol{\tau}}
\newcommand\CB{\mathcal{B}}
\newcommand\CL{\mathcal{L}}
\newcommand\CN{\mathcal{N}}
\newcommand\CR{\mathcal{R}}
\newcommand\CS{\mathcal{S}}
\newcommand\CT{\mathcal{T}}
\newcommand{\Kf}{\mathcal{K}} 
\newcommand{\Gf}{\mathcal{G}} 
\newcommand\UN{\textup{N}}
\newcommand\id{\operatorname{id}}
\begin{document}

\title{Numerical homogenization of H(curl)-problems}

\author{Dietmar Gallistl\footnotemark[2] \and Patrick Henning\footnotemark[3]\and Barbara Verf\"urth\footnotemark[4]}
\date{}
\maketitle

\renewcommand{\thefootnote}{\fnsymbol{footnote}}
\footnotetext[2]{Institut f\"ur Angewandte und Numerische Mathematik, Karlsruher Institut f\"ur Technologie, Englerstr. 2, D-76131 Karlsruhe, Germany}
\footnotetext[3]{Department of Mathematics, KTH Royal Institute of Technology, Lindstedtsv\"agen 25, SE-100 44 Stockholm, Sweden}
\footnotetext[4]{Applied Mathematics, Westf\"alische Wilhelms-Uni\-ver\-si\-t\"at M\"unster, Einsteinstr. 62, D-48149 M\"unster, Germany}
\renewcommand{\thefootnote}{\arabic{footnote}}

\begin{Abstract}
If an elliptic differential operator associated with an 
$\mathbf{H}(\mathrm{curl})$-problem involves rough (rapidly varying) coefficients,
then solutions to the corresponding $\mathbf{H}(\mathrm{curl})$-problem admit typically
very low regularity, which leads to arbitrarily bad convergence rates
for conventional numerical schemes.
The goal of this paper is to show that the missing regularity can be 
compensated through a corrector operator. More precisely, we consider 
the lowest order N{\'e}d{\'e}lec finite element space 
and show the existence of a linear corrector operator 
with four central properties: 
it is computable, $\mathbf{H}(\mathrm{curl})$-stable, 
quasi-local and allows for a correction of coarse finite element 
functions so that first-order estimates (in terms of the coarse 
mesh-size) in the $\mathbf{H}(\mathrm{curl})$ norm are obtained provided 
the right-hand side belongs to $\mathbf{H}(\mathrm{div})$.
With these four properties, a practical application is to construct 
generalized finite element spaces which can be straightforwardly used 
in a Galerkin method. In particular, this characterizes a homogenized 
solution and a first order corrector, including corresponding 
quantitative error estimates without the requirement of scale separation.
\end{Abstract}

\begin{keywords}
multiscale method, wave propagation, Maxwell's equations, finite element method, a priori error estimates
\end{keywords}

\begin{AMS}
35Q61, 65N12, 65N15, 65N30, 78M10
\end{AMS}

\section{Introduction}
Electromagnetic wave propagation plays an essential role in many physical applications, for instance, in the large field of wave optics.
In the last years, multiscale and heterogeneous materials are studied with great interest, e.g., in the context of photonic crystals \cite{JJWM08phc}.
These materials can exhibit unusual and astonishing (optical) properties, such as band gaps, perfect transmission or negative refraction \cite{CJJP02negrefraction, EP04negphC, LS15negindex}.
These problems are modeled by Maxwell's equations, which involve the curl-operator and the associated Sobolev space $\VH(\curl)$.
Additionally, the coefficients in the problems are rapidly oscillating on a fine scale for the context of photonic crystals and metamaterials.
The numerical simulation and approximation of the solution is then a challenging task for the following three reasons.
1.\ As with all multiscale problems, a direct treatment with standard methods in infeasible in many cases because it needs grids which resolve all discontinuities or oscillations of the material parameters.
2.\ Solutions to $\VH(\curl)$-problems with discontinuous coefficients in Lip\-schitz domains can have arbitrarily low regularity, see \cite{BGL13regularitymaxwell, CDN99maxwellinterface, Cost90regmaxwellremark}.
Hence, standard methods (see e.g., \cite{Monk} for an overview) suffer from bad convergence rates and fine meshes are needed to have a tolerably small error.
3.\ Due to the large kernel of the curl-operator, we cannot expect that the $L^2$-norm is of a lower order as the full $\VH(\curl)$-norm (the energy norm).
Thus, it is necessary to consider dual norms or the Helmholtz decomposition to obtain improved a priori error estimates.

In order to deal with the rapidly oscillating material parameters, we consider multiscale methods and thereby aim at a feasible numerical simulation.
In general, these methods try to decompose the exact solution into a macroscopic contribution (without oscillations), which can be discretized on a coarse mesh, and a fine-scale contribution.
Analytical homogenization for locally periodic $\VH(\curl)$-problems shows that there exists such a decomposition, where the macroscopic part is a good approximation in $H^{-1}$ and an additional fine-scale corrector leads to a good approximation in $L^2$ and $\VH(\curl)$, cf.\ \cite{CH15homerrormaxwell, HOV15maxwellHMM, Well2}.
Based on these analytical results, multiscale methods are developed, e.g., the Heterogeneous Multiscale Method in \cite{HOV15maxwellHMM, CFS17hmmmaxwell} and asymptotic expansion methods in \cite{CZAL10maxwell}.
The question is now in how far such considerations can be extended beyond the (locally) periodic case.

The main contribution of this paper is the numerical homogenization of $\VH(\curl)$-elliptic problems -- beyond the periodic case and without assuming scale separation.
The main findings can be summarized as follows.
We show that the exact solution can indeed be decomposed into a coarse and fine part, using a suitable interpolation operator.
The coarse part gives an optimal approximation in the $H^{-1}$-norm, the best we can hope for in this situation.
In order to obtain optimal $L^2$ and $\VH(\curl)$ approximations, we have to add a so called fine-scale corrector or corrector Green's operator.
This corrector shows exponential decay and can therefore be truncated to local patches of macroscopic elements, so that it can be computed efficiently.

This technique of numerical homogenization is known as Localized Orthogonal Decomposition (LOD) and it was originally proposed by M{\aa}lqvist and Peterseim \cite{MP14LOD} to solve elliptic multiscale problems 
through an orthogonalization procedure with a problem-specific \quotes{multiscale} inner product.
The LOD has been extensively studied in the context of Lagrange finite elements \cite{HM14LODbdry, HP13oversampl}, where we particularly refer to the contributions written on wave phenomena \cite{AH17LODwaves, BrG16, bgp2017, GP15scatteringPG, OV16a, P15LODhelmholtz, PeS17}. Aside from Lagrange finite elements, an LOD application in Raviart-Thomas spaces was given in \cite{HHM16LODmixed}. 

A crucial ingredient for numerical homogenization procedures in the spirit of LODs is the choice of a suitable interpolation operator.
As we will see later, in our case we require it to be computable, $\VH(\curl)$-stable, (quasi-)local and that it commutes with the curl-operator.
Constructing an operator that enjoys such properties is a very delicate task and a lot of operators have been suggested -- with different backgrounds and applications in mind.
The nodal interpolation operator, see e.g.\ \cite[Thm.\ 5.41]{Monk}, and the interpolation operators introduced in \cite{DB05maxwellpintpol} are not well-defined on $\VH(\curl)$ and hence lack the required stability.
Various (quasi)-interpolation operators are constructed as composition of smoothing and some (nodal) interpolation, such as
\cite{Chr07intpol, CW08intpol, DH14aposteriorimaxwell, EG15intpol, Sch05multilevel,Sch08aposteriori}.
For all of them, the kernel of the operator is practically hard or even impossible to compute and they only fulfill the projection \emph{or} the locality property.
Finally, we mention the interpolation operator of \cite{EG15intpolbestapprox} which is local and a projection, however, which does not commute with the exterior derivative.
A suitable candidate (and to the authors' best knowledge, the only one) that enjoys all required properties was proposed by Falk and Winther in \cite{FalkWinther2014}.

This paper thereby also shows the applicability of the Falk-Winther operator. 
In this context, we mention two results, which may be of own interest: a localized regular decomposition of the interpolation error (in the spirit of \cite{Sch08aposteriori}), and the practicable implementation of the Falk-Winther operator as a matrix.
The last point admits the efficient implementation of our numerical scheme and we refer to \cite{EHMP16LODimpl} for general considerations.

The paper is organized as follows.
Section \ref{sec:setting} introduces the general curl-curl-problem under consideration and briefly mentions its relation to Maxwell's equations.
In Section \ref{sec:motivation}, we give a short motivation of our approach from periodic homogenization.
Section \ref{sec:intpol} introduces the necessary notation for meshes, finite element spaces, and interpolation operators.
We introduce the Corrector Green's Operator in Section \ref{sec:LODideal} and show its approximation properties.
We localize the corrector operator in Section \ref{sec:LOD} and present the main apriori error estimates.
The proofs of the decay of the correctors are given in Section \ref{sec:decaycorrectors}.
Details on the definition of the interpolation operator and its implementation are given in Section \ref{sec:intpolimpl}.

The notation $a\lesssim b$ is used for $a\leq Cb$ with a constant $C$ independent of the mesh size $H$ and the oversampling parameter $m$.
It will be used in (technical) proofs for simplicity and readability.

\section{Model problem}
\label{sec:setting}
Let $\Omega\subset \rz^3$ be an open, bounded, contractible 
domain with polyhedral Lipschitz boundary.
We consider the following so called curl-curl-problem: Find $\Vu:\Omega\to\cz^3$ such that
\begin{equation}
\label{eq:curlcurl}
\begin{split}
\curl(\mu\curl\Vu)+\kappa\Vu&=\Vf\quad\text{in }\Omega,\\
\Vu\times \Vn&=0\quad\text{on }\partial \Omega
\end{split}
\end{equation}
with the outer unit normal $\Vn$ of $\Omega$. 
Exact assumptions on the parameters $\mu$ and $\kappa$ and the right-hand side $\Vf$ are given in Assumption~\ref{asspt:sesquiform} below,
but we implicitly assume that the above problem is a multiscale problem, i.e.\ the coefficients $\mu$ and $\kappa$ are rapidly varying on a very fine sale. 

Such curl-curl-problems arise in various formulations and reductions of Maxwell's equations and we shortly give a few examples.
In all cases, our coefficient $\mu$ equals $\tilde{\mu}^{-1}$ with the magnetic permeability $\tilde{\mu}$, a material parameter.
The right-hand side $\Vf$ is related to (source) current densities.
One possible example are Maxwell's equations in a linear conductive medium, subject to Ohm's law, together with the so called time-harmonic ansatz $\hat{\Vpsi}(x,t)=\Vpsi(x)\exp(-i\omega t)$ for all fields. 
In this case, one obtains the above curl-curl-problem with $\Vu=\VE$, the electric field, and $\kappa=i\omega\sigma-\omega^2\varepsilon$ related to the electric permittivity $\varepsilon$ and  the conductivity $\sigma$ of the material.
Another example are implicit time-step discretizations of eddy current simulations, where the above curl-curl-problem has to be solved in each time step.
In that case $\Vu$ is the vector potential associated with the magnetic field and $\kappa\approx\sigma/\tau$, where $\tau$ is the time-step size. Coefficients with multiscale properties can for instance arise in the context of photonic crystals.

Before we define the variational problem associated with our general curl-curl-problem \eqref{eq:curlcurl}, we need to introduce some function spaces.
In the following, bold face letters will indicate vector-valued quantities and all functions are complex-valued, unless explicitly mentioned.
For any bounded subdomain $G\subset \Omega$, we define the space
\[\VH(\curl, G):=\{ \Vv\in L^2(G, \cz^3)|\curl\Vv\in L^2(G, \cz^3)\}\]
with the inner product $(\Vv, \Vw)_{\VH(\curl, G)}:=(\curl\Vv, \curl\Vw)_{L^2(G)}+(\Vv, \Vw)_{L^2(G)}$ with the complex $L^2$-inner product. 
We will omit the domain $G$ if it is equal to the full domain $\Omega$.
The restriction of $\VH(\curl, \Omega)$ to functions with a zero tangential trace 
is defined as
\[\VH_0(\curl, \Omega):=\{\Vv\in \VH(\curl, \Omega)|\hspace{3pt} \Vv\times \Vn \vert_{\partial \Omega} =0\}. \]
Similarly, we define the space 
\[\VH(\Div, G):=\{\Vv\in L^2(G, \cz^3)|\Div \Vv\in L^2(G, \cz)\}\]
with corresponding inner product $(\cdot, \cdot)_{\VH(\Div, G)}$.
For more details we refer to \cite{Monk}. 

We make the following assumptions on the data of our problem. 
\begin{assumption}
\label{asspt:sesquiform}
Let $\Vf\in \VH(\Div, \Omega)$ and let $\mu\in L^\infty(\Omega, \rz^{3 \times 3})$ and $\kappa\in L^\infty(\Omega, \cz^{3 \times 3})$.
For any open subset $G\subset\Omega$,
we define the sesquilinear  form 
$\CB_{G}: \VH(\curl,G)\times \VH(\curl,G)\to \cz$ as
\begin{equation}
\label{eq:sesquiform}
\CB_{G}(\Vv, \Vpsi):=(\mu\curl \Vv, \curl\Vpsi)_{L^2(G)}
                      +(\kappa\Vv, \Vpsi)_{L^2(G)},
\end{equation}
and set $\CB:=\CB_\Omega$.
The form $\CB_{G}$ is obviously continuous, i.e.\ there is $C_B>0$ such that 
\begin{equation*}
 |\CB_{G}(\Vv, \Vpsi)|\leq C_B\|\Vv\|_{\VH(\curl,G)}\|\Vpsi\|_{\VH(\curl,G)}
 \quad\text{for all }\Vv,\Vpsi\in\VH(\curl,G).
\end{equation*}
We furthermore assume that $\mu$ and $\kappa$ are such that 
$\CB: \VH_0(\curl)\times \VH_0(\curl)\to \cz$
is $\VH_0(\curl)$-elliptic, 
i.e.\ there is $\alpha>0$ such that
\[
 |\CB(\Vv, \Vv)|\geq \alpha\|\Vv\|^2_{\VH(\curl)}
 \quad\text{for all }\Vv\in\VH_0(\curl) .
\]
\end{assumption}

We now give a precise definition of our model problem for this article.
Let Assumption \ref{asspt:sesquiform} be fulfilled. We look for $\Vu\in \VH_0(\curl, \Omega)$ such that
\begin{equation}
\label{eq:problem}
\CB(\Vu, \Vpsi)=(\Vf, \Vpsi)_{L^2(\Omega)} \quad\text{for all } \Vpsi\in \VH_0(\curl, \Omega).
\end{equation}
Existence and uniqueness of a solution to \eqref{eq:problem} follow from the Lax-Milgram-Babu{\v{s}}ka theorem  \cite{Bab70fem}.
Assumption \ref{asspt:sesquiform} is fulfilled in the following two important examples mentioned at the beginning:
(i) a strictly positive real function in the identity term, i.e.\ $\kappa\in L^\infty(\Omega, \rz)$, as it occurs in the time-step discretization of eddy-current problems;
(ii) a complex $\kappa$ with strictly negative real part and strictly positive imaginary part, as it occurs for time-harmonic Maxwell's equations in a conductive medium.
Further possibilities of $\mu$ and $\kappa$ yielding an $\VH(\curl)$-elliptic problem are described in \cite{FR05maxwell}.

\begin{remark}
The assumption of contractibility of $\Omega$ is only required to ensure the existence of local regular decompositions later used in the proof of Lemma \ref{lem:localregulardecomp}. We note that this assumption can be relaxed by assuming that $\Omega$ is simply connected in certain local subdomains formed by unions of tetrahedra (i.e. in patches of the form $\UN(\Omega_P)$, using the notation from Lemma \ref{lem:localregulardecomp}).
\end{remark}

\section{Motivation of the approach}
\label{sec:motivation}

For the sake of the argument, let us consider model problem \eqref{eq:curlcurl} for the case that the coefficients $\mu$ and $\kappa$ are replaced by parametrized multiscale coefficients $\mu_{\delta}$ and $\kappa_\delta$, respectively.
Here, $0<\delta \ll 1$ is a small parameter that characterizes the roughness of the coefficient or respectively the speed of the variations, i.e.\ the smaller $\delta$, the faster the oscillations of $\mu_{\delta}$ and $\kappa_\delta$. 
If we discretize this model problem
in the lowest order N{\'e}d{\'e}lec finite element space $\mathring{\CN}(\CT_H)$, we have the classical error estimate of the form
\begin{align*}
\inf_{\mathbf{v}_H \in \mathring{\CN}(\CT_H)} \| \Vu_{\delta} - \mathbf{v}_H \|_{\VH(\curl)} \le C H \left( \| \Vu_{\delta} \|_{H^1(\Omega)} + \| \curl \Vu_{\delta} \|_{H^1(\Omega)} \right).
\end{align*}
However, if the coefficients $\mu_{\delta}$ and $\kappa_\delta$ are discontinuous the necessary regularity for this estimate is not available, see \cite{Cost90regmaxwellremark, CDN99maxwellinterface, BGL13regularitymaxwell}. 
On the other hand, if $\mu_{\delta}$ and $\kappa_\delta$ are sufficiently regular but $\delta$ small, then we face the blow-up with $\| \Vu_{\delta}  \|_{H^1(\Omega)} + \| \curl \Vu_{\delta} \|_{H^1(\Omega)}\rightarrow \infty$ for $\delta \rightarrow 0$, which makes the estimate useless in practice, unless the mesh size $H$ becomes very small to compensate for the blow-up. This does not change if we replace the $\VH(\curl)$-norm by the $L^2(\Omega)$-norm since both norms are equivalent in our setting.

To understand if there exist any meaningful approximations of $\Vu_{\delta}$ in $\mathring{\CN}(\CT_H)$ (even on coarse meshes), we make a short excursus to classical homogenization theory. For that we assume that the coefficients $\mu_{\delta}(x)=\mu(x/\delta)$ and $\kappa_\delta(x)=\kappa(x/\delta)$ are periodically oscillating with period $\delta$. In this case it is known (cf.\ \cite{CFS17hmmmaxwell, HOV15maxwellHMM, Well2}) that the sequence of exact solutions $\Vu_{\delta}$ converges weakly in  $\VH_0(\curl)$ to a \emph{homogenized} function $\Vu_{0}$. Since $\Vu_0 \in \VH_0(\curl)$ is $\delta$-independent and slow, it can be well approximated in $\mathring{\CN}(\CT_H)$. Furthermore, there exists a \emph{corrector} $\Kf_{\delta}(\Vu_0)$ such that 
\[\Vu_{\delta} \approx (\id + \Kf_{\delta})\Vu_0 \]
is a good approximation in $\VH(\curl)$, i.e.\ the error converges strongly to zero with
\[
\| \Vu_{\delta} -( \Vu_0 + \Kf_{\delta}(\Vu_0)) \|_{\VH(\curl)} \rightarrow 0
\qquad \mbox{for } \delta \rightarrow 0.
\]
Here the nature of the corrector is revealed by two estimates. In fact, $\Kf_{\delta}(\Vu_0)$ admits a decomposition into a gradient part and a part with small amplitude (cf. \cite{HOV15maxwellHMM, CH15homerrormaxwell, Well2}) such that
\[
 \Kf_{\delta}(\Vu_0) =  \Vz_{\delta} + \nabla \theta_{\delta}
\]
with
\begin{align}
\label{hom-corrector-est-1}
\delta^{-1} \| \Vz_{\delta} \|_{L^2(\Omega)} + \| \Vz_{\delta} \|_{\VH(\curl)} &\le C\| \Vu_0 \|_{\VH(\curl)}\\
\label{hom-corrector-est-2}
\text{and}\qquad\delta^{-1} \| \theta_{\delta} \|_{L^2(\Omega)} + \| \nabla \theta_{\delta} \|_{L^2(\Omega)} &\le C \| \Vu_0 \|_{\VH(\curl)},
\end{align}
where $C=C(\alpha,C_B)$ only depends on the constants appearing in Assumption \ref{asspt:sesquiform}. First, we immediately see that the estimates imply that $\Kf_{\delta}(\Vu_0)$ is $\VH(\curl)$-stable in the sense that it holds
\begin{align*}
\| \Kf_{\delta}(\Vu_0) \|_{\VH(\curl)} \le C \| \Vu_0 \|_{\VH(\curl)}.
\end{align*}
Second, and more interestingly, we see that alone from the above properties, we can conclude that $\Vu_0$ \emph{must} be a good approximation of the exact solution in the space $H^{-1}(\Omega,\cz^3)$. In fact, using \eqref{hom-corrector-est-1} and \eqref{hom-corrector-est-2} we have for any
$\mathbf{v}\in H^1_0(\Omega,\cz^3)$ with $\| \mathbf{v} \|_{H^1(\Omega)}=1$ that
\begin{align*}
\left|\int_{\Omega} \Kf_{\delta}(\Vu_0) \cdot \mathbf{v} \right|= 
\left|\int_{\Omega} \Vz_{\delta} \cdot \mathbf{v} - \int_{\Omega} \theta_{\delta} \hspace{2pt} (\nabla \cdot \mathbf{v}) \right| \le 
\| \Vz_{\delta} \|_{L^2(\Omega)} + \| \theta_{\delta} \|_{L^2(\Omega)} 
\le C \delta \| \Vu_0 \|_{\VH(\curl)}.
\end{align*}
Consequently we have strong convergence in $H^{-1}(\Omega)$ with
\begin{align*}
\| \Vu_{\delta} - \Vu_0 \|_{H^{-1}(\Omega)} 
\le \| \Vu_{\delta} - ( \Vu_0 + \Kf_{\delta}(\Vu_0))\|_{H^{-1}(\Omega)} +  \| \Kf_{\delta}(\Vu_0) \|_{H^{-1}(\Omega)} \overset{\delta \rightarrow 0}{\longrightarrow} 0.
\end{align*}
We conclude two things. Firstly, even though the coarse space $\mathring{\CN}(\CT_H)$ does not contain good $\VH(\curl)$- or $L^2$-approximations, it still contains meaningful approximations in $H^{-1}(\Omega)$. 
Secondly, the fact that the 
coarse part
$\Vu_0$ 
is a good $H^{-1}$-approximation of $\Vu_{\delta}$ is an intrinsic conclusion from the properties of the correction $\Kf_{\delta}(\Vu_0)$.

In this paper we are concerned with the question if the above considerations can be transferred to a discrete setting beyond the assumption of periodicity. More precisely, defining a coarse level of resolution through the space $\mathring{\CN}(\CT_H)$, we ask if it is possible to find a coarse function $\Vu_H$ and an (efficiently computable) $\VH(\curl)$-stable operator $\Kf$, such that
\begin{align}
\label{motivation:int-estimates}
\| \Vu_{\delta} - \Vu_H \|_{H^{-1}(\Omega)} \le C H \qquad \mbox{and} \qquad \| \Vu_{\delta} - (I+\Kf)\Vu_H \|_{\VH(\curl)} \le CH,
\end{align}
with $C$ being independent of the oscillations in terms of $\delta$. A natural ansatz for the coarse part is $\Vu_H=\pi_H( \Vu_{\delta} )$ for a suitable projection $\pi_H : \VH(\curl) \rightarrow \mathring{\CN}(\CT_H)$. However, from the considerations above we know that $\Vu_H=\pi_H( \Vu_{\delta} )$ can only be a good $H^{-1}$-approximation if the error fulfills a discrete analog to the estimates \eqref{hom-corrector-est-1} and \eqref{hom-corrector-est-2}. Since $\Vu_{\delta} - \pi_H( \Vu_{\delta} )$ is nothing but an interpolation error, we can immediately derive a sufficient condition for our choice of $\pi_H$: we need that, for any $\Vv\in \VH_0(\curl, \Omega)$, there are $\Vz\in \VH^1_0(\Omega)$ and $\theta\in H^1_0(\Omega)$ such that
\[\Vv-\pi_H \Vv=\Vz+\nabla \theta\]
and 
\begin{equation}
\label{motivation:properties-pi-H}
\begin{split}
H^{-1}\|\Vz\|_{L^2(\Omega)}+\|\nabla \Vz\|_{\VH(\curl)} &\leq C \|\curl\Vv\|_{L^2(\Omega)},\\
H^{-1}\|\theta\|_{L^2(\Omega)}+\|\nabla \theta\|_{L^2(\Omega)}&\leq C \|\curl\Vv\|_{L^2(\Omega)}.
\end{split}
\end{equation}
This is a sufficient condition for $\pi_H$. Note that the above properties are not fulfilled for e.g. the $L^2$-projection. This resembles the fact that the $L^2$-projection does typically not yield a good $H^{-1}$-approximation in our setting.

We conclude this paragraph by summarizing that if we have a projection $\pi_H$ fulfilling \eqref{motivation:properties-pi-H}, then we can define a coarse scale numerically through the space 
$\mathring{\CN}(\CT_H) = \mbox{im}(\pi_H)$.  
On the other hand, to ensure that the corrector inherits the desired decomposition with estimates \eqref{motivation:int-estimates}, it needs to be constructed such that it maps into the kernel of the projection operator, i.e. $\mbox{im}(\Kf)\subset\mbox{ker}(\pi_H)$.

\section{Mesh and interpolation operator}
\label{sec:intpol}

In this section we introduce the basic notation for establishing our coarse scale discretization and we will present a projection operator that fulfills the sufficient conditions derived in the previous section.

Let $\CT_H$ be a regular partition of $\Omega$ into tetrahedra, such that $\cup\CT_H=\overline{\Omega}$ and any two distinct $T, T'\in \CT_H$ are either disjoint or share a common vertex, edge or face.
We assume the partition $\CT_H$ to be shape-regular and quasi-uniform.
The global mesh size is defined as $H:=\max\{ \diam(T)|T\in \CT_{H}\}$.
$\CT_H$ is a coarse mesh in the sense that it does not resolve the fine-scale oscillations of the parameters.

Given any (possibly even not connected) subdomain $G\subset \overline{\Omega}$ define its neighborhood via
\[\UN(G):=\Int(\cup\{T\in \CT_{H}|T\cap\overline{G}\neq \emptyset\})\]
and for any $m\geq 2$ the patches
\[\UN^1(G):=\UN(G)\qquad \text{and}\qquad\UN^m(G):=\UN(\UN^{m-1}(G)),\]
see Figure \ref{fig:patch} for an example.
The shape regularity implies that there is a uniform bound $C_{\ol, m}$ on the number of elements in the $m$-th order patch
\[\max_{T\in \CT_{H}}\operatorname{card}\{K\in \CT_{H}|K\subset\overline{\UN^m(T)}\}\leq C_{\ol, m}\]
and the quasi-uniformity implies that $C_{\ol, m}$ depends polynomially on  $m$.
We abbreviate $C_{\ol}:=C_{\ol, 1}$.

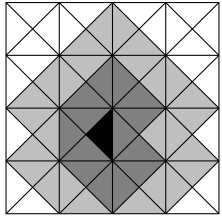
\begin{figure}
\begin{center}
\begin{tikzpicture}[scale=.7]
\path[fill=lightgray] (3,0)--(4,1)--(3.5,1.5)--(4,2)--(2,4)--(0,2)--(0.5, 1.5)--(0,1)--(1,0)--cycle;
\path[fill=gray] (2,0)--(3,1)--(2.5, 1.5)--(3,2)--(2,3)--(1,2)--(1,1)--cycle;
\path[fill=black] (1.5, 1.5)--(2,2)--(2,1)--cycle;
\draw(0,0) rectangle (4,4);
\draw(1,0)--(1,4);
\draw(2,0)--(2,4);
\draw(3,0)--(3,4);
\draw(0,1)--(4,1);
\draw(0,2)--(4,2);
\draw(0,3)--(4,3);
\draw (0,0)--(4,4);
\draw (0,1)--(3,4);
\draw(0,2)--(2,4);
\draw(0,3)--(1,4);
\draw(1,0)--(4,3);
\draw(2,0)--(4,2);
\draw(3,0)--(4,1);
\draw(0,4)--(4,0);
\draw(0,3)--(3,0);
\draw(0,2)--(2,0);
\draw(0,1)--(1,0);
\draw(1,4)--(4,1);
\draw(2,4)--(4,2);
\draw(3,4)--(4,3);
\end{tikzpicture}
\end{center}
\caption{Triangle $T$ (in black) and its first and second order patches (additional elements for $\UN(T)$ in dark gray and additional elements for $\UN^2(T)$ in light gray).}
\label{fig:patch}
\end{figure}

The space of $\CT_H$-piecewise affine and continuous functions is
denoted by $\CS^1(\CT_H)$.
We denote the lowest order N{\'e}d{\'e}lec finite element, cf.\ \cite[Section 5.5]{Monk}, by
\[
  \mathring{\CN}(\CT_H):=\{\Vv\in \VH_0(\curl)|\forall T\in \CT_H: \Vv|_T(\Vx)=\Va_T\times\Vx+\Vb_T \text{ with }\Va_T, \Vb_T\in\cz^3\}
\]
and the space of Raviart--Thomas fields by
\[
  \mathring{\CR\CT}(\CT_H):=\{\Vv\in \VH_0(\Div)|\forall T\in \CT_H: \Vv|_T(\Vx)=\Va_T\cdot\Vx+\Vb_T \text{ with }\Va_T\in \cz, \Vb_T\in\cz^3\}.
\]
As motivated in Section \ref{sec:motivation} we require an $\VH(\curl)$-stable interpolation operator $\pi_H^E:\VH_0(\curl)\to \mathring{\CN}(\CT_H)$ that allows for a decomposition with the estimates such as \eqref{motivation:properties-pi-H}. However, from the view point of numerical homogenization where corrector problems should be localized to small subdomains, we also need that $\pi_H^E$ is local and (as we will see later)
that it fits into a commuting diagram with other stable interpolation operators for lowest order $H^1(\Omega)$, $\VH(\Div)$ and $L^2(\Omega)$ elements.
As discussed in the introduction, the only suitable candidate is the Falk-Winther interpolation operator $\pi_H^E$ \cite{FalkWinther2014}.
We postpone a precise definition of $\pi_H^E$ to Section \ref{sec:intpolimpl} and just summarize its most important properties in the following proposition.
\begin{proposition}\label{p:proj-pi-H-E}
There exists a projection $\pi_H^E:\VH_0(\curl)\to \mathring{\CN}(\CT_H)$ with the following local stability properties:
For all $\Vv\in \VH_0(\curl)$ and all $T\in \CT_H$ it holds that
\begin{align}
\label{eq:stabilityL2}
\|\pi_H^E(\Vv)\|_{L^2(T)}&\leq C_\pi \bigl(\|\Vv\|_{L^2(\UN(T))}+H\|\curl\Vv\|_{L^2(\UN(T))}\bigr),\\*
\label{eq:stabilitycurl}
\|\curl\pi_H^E(\Vv)\|_{L^2(T)}&\leq C_\pi \|\curl\Vv\|_{L^2(\UN(T))}.
\end{align}
Furthermore, there exists a projection
$\pi_H^F:\VH_0(\Div)\to \mathring{\mathcal{RT}}(\CT_H)$
to the Raviart-Thomas space such that the following commutation property
holds
\[\curl\pi_H^E(\Vv)=\pi_H^F(\curl \Vv).\] 
\end{proposition}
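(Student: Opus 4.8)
Since $\pi_H^E$ is \emph{defined} to be the lowest-order Falk--Winther operator of \cite{FalkWinther2014} (the concrete construction being deferred to Section~\ref{sec:intpolimpl}), the plan is to recall only those structural features of the construction that drive the four asserted properties. First I would write $\pi_H^E=\CI^E\circ R^E$, where $\CI^E$ is the canonical edge-moment N{\'e}d{\'e}lec interpolation and $R^E:\VH_0(\curl)\to\VH_0(\curl)$ is a \emph{coefficient-independent, locally defined smoothing operator}, assembled from the solutions of small finite-dimensional de Rham-type problems on element patches, each supported in a fixed-layer neighborhood of $T$ that (after bookkeeping) can be taken to be $\UN(T)$. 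The three facts I would extract from \cite{FalkWinther2014} are: (i) $R^E$ preserves all edge moments of a field in $\mathring{\CN}(\CT_H)$, hence reproduces $\mathring{\CN}(\CT_H)$ after applying $\CI^E$; (ii) $R^E$ maps into functions regular enough (piecewise polynomial on a refinement, with well-defined tangential traces) that the moments defining $\CI^E$ are meaningful and locally bounded; (iii) together with the analogous smoothing $R^F$ for $\mathring{\mathcal{RT}}(\CT_H)$, one has $\curl R^E\Vv=R^F\curl\Vv$.

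Granting (i)--(iii), the \emph{projection property} is immediate: for $\Vv\in\mathring{\CN}(\CT_H)$, $R^E\Vv$ carries the same edge moments as $\Vv$, and $\CI^E$ is the projection onto $\mathring{\CN}(\CT_H)$ determined by those moments, so $\pi_H^E\Vv=\Vv$. For the \emph{commuting relation}, I would set $\pi_H^F:=\CI^F\circ R^F$ (a projection onto $\mathring{\mathcal{RT}}(\CT_H)$ by the same reasoning) and compose (iii) with the classical de Rham commuting identity $\curl\,\CI^E w=\CI^F\curl w$, valid on the regular range of $R^E$, to get $\curl\pi_H^E\Vv=\CI^F\curl R^E\Vv=\CI^F R^F\curl\Vv=\pi_H^F(\curl\Vv)$.

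For the \emph{stability bounds} the engine is a scaling argument. Each local problem entering $R^E$ and $\CI^E$ lives on a patch contained in $\UN(T)$ and transforms affinely to a reference configuration; a Bramble--Hilbert / transformation estimate on that fixed configuration then yields
\begin{equation*}
  \|R^E\Vv\|_{L^2(T)}+H\|\curl R^E\Vv\|_{L^2(T)}\;\lesssim\;\|\Vv\|_{L^2(\UN(T))}+H\|\curl\Vv\|_{L^2(\UN(T))},
\end{equation*}
with a constant depending only on shape-regularity (not on $\mu,\kappa$, since $R^E$ ignores them), and $\CI^E$ is $L^2$-stable with the matching scaling on the range of $R^E$; composing these gives \eqref{eq:stabilityL2}. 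The curl-estimate \eqref{eq:stabilitycurl} I would deduce \emph{without} the mesh weight directly from the commuting relation: $\curl\pi_H^E\Vv=\pi_H^F(\curl\Vv)$, and $\pi_H^F$ — built from $\VH(\Div)$-data, where the canonical moments are face fluxes — satisfies the scale-invariant local bound $\|\pi_H^F\Vw\|_{L^2(T)}\lesssim\|\Vw\|_{L^2(\UN(T))}$; applying it to $\Vw=\curl\Vv$ finishes the proof.

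The genuinely delicate part is the stability: one must verify that the Falk--Winther local problems, specialized to the lowest order and carrying the homogeneous tangential condition, have support inside the single-layer neighborhood $\UN(T)$ (this is what fixes the patches on the right-hand sides), and that the affine scaling produces \emph{exactly} the weight $H$ in front of $\|\curl\Vv\|$ — the natural scaling, since the edge degrees of freedom behave like $H^{-1}$ times an $L^2$-type quantity. Near $\partial\Omega$ the local de Rham problems defining $R^E$ and $R^F$ must encode $\Vv\times\Vn=0$, and their solvability rests on the (local) contractibility/simple-connectedness of the patches, exactly as in \cite{FalkWinther2014}; everything else is routine bookkeeping.
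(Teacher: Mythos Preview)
The paper's own proof is a one-line citation to \cite{FalkWinther2014}, so your sketch goes well beyond what the paper provides, and the high-level ingredients you list—local finite-dimensional patch problems, projection via moment preservation, commutation from the de~Rham structure, stability by affine scaling—are the right ones.

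There is, however, a structural mischaracterization: the factorization $\pi_H^E=\CI^E\circ R^E$ with a \emph{single global smoothing} $R^E$ is precisely the form of the Christiansen--Winther/Sch\"oberl smoothed projections, which the introduction explicitly rules out because they fail to be simultaneously local \emph{and} projections. The Falk--Winther construction (spelled out in Section~\ref{sec:intpolimpl}, formula \eqref{e:R1def}) is different: after an averaging step $S^1$, the correction term uses \emph{edge-dependent} local operators $Q^1_E$, one for each coarse edge, solved on the extended edge patch $\omega_E^{\mathit{ext}}$. The $E$-th output coefficient is $\fint_E((\id-S^1)Q^1_E\Vu)\cdot\Vt_E\,ds$; since $Q^1_E$ varies with $E$, there is no single function $R^E\Vu$ whose canonical edge moments reproduce this, short of setting $R^E:=\pi_H^E$ itself. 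Consequently your commutation argument via $\curl R^E=R^F\curl$ does not literally apply—Falk and Winther obtain commutation instead from a double-complex structure that matches the local problems at adjacent levels of the de~Rham sequence. Likewise, $R^E$ does not map into ``piecewise polynomials on a refinement''; the construction stays entirely on $\CT_H$.

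This does not invalidate the \emph{conclusions}, only the mechanism. If you want a self-contained sketch, drop the global factorization and work directly with \eqref{e:R1def}: verify that $Q^1_E$ reproduces $\Vu|_{\omega_E^{\mathit{ext}}}$ whenever $\Vu\in\mathring{\CN}(\CT_H)$ (this yields the projection property), derive commutation from the matching definition of $\pi_H^F$ at the face level, and run your scaling argument on each $\omega_E^{\mathit{ext}}\subset\UN(T)$ separately to obtain \eqref{eq:stabilityL2}--\eqref{eq:stabilitycurl}.
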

\begin{proof}
 See \cite{FalkWinther2014} for a proof, which can be adapted to
 the present case of homogeneous boundary values.
\end{proof}

As explained in the motivation in Section \ref{sec:motivation}, we also require that $\pi_H^E$ allows for a regular decomposition in the sense of \eqref{motivation:properties-pi-H}. In general, regular decompositions are an important tool for the study of $\VH(\curl)$-elliptic problems and involve that a vector field $\Vv\in \VH_0(\curl)$ is split -- in a non-unique way -- into a gradient and a (regular) remainder in $\VH^1$, see \cite{Hipt02FEem, PZ02Schwarz}.  In contrast to the Helmholtz decomposition, this splitting is not orthogonal with respect to the $L^2$-inner product. If the function $\Vv\in \VH_0(\curl)$ is additionally known to be in the kernel of a suitable quasi-interpolation, a modified decomposition can be derived that is localized and $H$-weighted. In particular, the weighting with $H$ allows for estimates similar as the one stated in \eqref{motivation:properties-pi-H}. The first proof of such a modified decomposition was given by Sch\"oberl \cite{Sch08aposteriori}. In the following we shall use his results and the locality of the Falk-Winther operator to recover a similar decomposition for the projection $\pi_H^E$. More precisely, we have the following lemma which is crucial for our analysis.

\begin{lemma}
\label{lem:localregulardecomp}
Let $\pi_H^E$ denote the projection from Proposition \ref{p:proj-pi-H-E}. For any $\Vv\in \VH_0(\curl, \Omega)$, there are $\Vz\in \VH^1_0(\Omega)$ and $\theta\in H^1_0(\Omega)$ such that
\[\Vv-\pi_H^E(\Vv)=\Vz+\nabla \theta\]
with the local bounds for every $T\in \CT_H$
\begin{equation}
\label{eq:regulardecomp}
\begin{split}
H^{-1}\|\Vz\|_{L^2(T)}+\|\nabla \Vz\|_{L^2(T)}&\leq C_z\|\curl\Vv\|_{L^2(\UN^3(T))},\\
H^{-1}\|\theta\|_{L^2(T)}+\|\nabla \theta\|_{L^2(T)}&\leq C_\theta\bigl(\|\Vv\|_{L^2(\UN^3(T))}+H\|\curl\Vv\|_{L^2(\UN^3(T))}\bigr),
\end{split}
\end{equation}
where $\nabla \Vz$ stands for the Jacobi matrix of $\Vz$.
Here $C_z$ and $C_\theta$ are generic constants that only depend on the regularity of the coarse mesh. 
\end{lemma}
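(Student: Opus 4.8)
The plan is to combine Schöberl's localized regular decomposition with the locality of the Falk-Winther operator $\pi_H^E$ from Proposition~\ref{p:proj-pi-H-E}. First I would recall Schöberl's result \cite{Sch08aposteriori}: for any $\Vw\in \VH_0(\curl,\Omega)$ there exist $\tilde{\Vz}\in \VH^1_0(\Omega)$ and $\tilde{\theta}\in H^1_0(\Omega)$ with $\Vw = \tilde{\Vz} + \nabla\tilde{\theta}$ and \emph{local} bounds of the form $H^{-1}\|\tilde{\Vz}\|_{L^2(T)} + \|\nabla\tilde{\Vz}\|_{L^2(T)} \lesssim \|\Vw\|_{\VH(\curl,\UN^k(T))}$ for some fixed patch order $k$, and similarly for $\tilde\theta$, where crucially the right-hand side only sees $\Vw$ and $\curl\Vw$ on a bounded patch around $T$. (If the cited reference states this decomposition globally rather than with element-local bounds, I would reconstruct the local version by using a partition-of-unity subordinate to the coarse mesh together with the stable regular/Helmholtz-type splitting on patches, exploiting that $\Omega$ — or each relevant patch $\UN(\Omega_P)$ — is simply connected, as flagged in the Remark after Assumption~\ref{asspt:sesquiform}.)

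Next I would apply this to $\Vw := \Vv - \pi_H^E(\Vv) \in \VH_0(\curl,\Omega)$, obtaining $\Vz := \tilde{\Vz}$ and $\theta := \tilde{\theta}$ with $\Vv - \pi_H^E(\Vv) = \Vz + \nabla\theta$ and local bounds controlled by $\|\Vv - \pi_H^E(\Vv)\|_{\VH(\curl,\UN^k(T))}$. The task then reduces to estimating this patch-local interpolation error. For the $\curl$-part I would use the commutation property $\curl\pi_H^E(\Vv) = \pi_H^F(\curl\Vv)$ together with the fact that $\pi_H^F$ is a projection onto Raviart-Thomas fields satisfying a local $L^2$-stability estimate (standard for the Falk-Winther $\VH(\Div)$ operator, analogous to \eqref{eq:stabilitycurl}); this gives $\|\curl(\Vv-\pi_H^E\Vv)\|_{L^2(\UN^k(T))} \lesssim \|\curl\Vv\|_{L^2(\UN^{k+1}(T))}$. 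For the $L^2$-part of $\Vw$ I would invoke the local stability \eqref{eq:stabilityL2}, yielding $\|\Vv - \pi_H^E(\Vv)\|_{L^2(\UN^k(T))} \lesssim \|\Vv\|_{L^2(\UN^{k+1}(T))} + H\|\curl\Vv\|_{L^2(\UN^{k+1}(T))}$. Combining these with the regular-decomposition bounds and absorbing the patch order into $\UN^3(T)$ (tracking the arithmetic so that $k+1 \le 3$, i.e. taking Schöberl's decomposition with $k\le 2$) produces exactly \eqref{eq:regulardecomp}; the finite-overlap property with $C_{\ol,m}$ polynomial in $m$ keeps all constants mesh-independent.

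The main obstacle I anticipate is the localization bookkeeping for the regular decomposition: Schöberl's theorem is typically formulated either globally or with the finite-element kernel hypothesis built in, and one must carefully verify that composing it with $\pi_H^E$ preserves element-local (as opposed to merely global) error control, with a patch radius that does not exceed three. This requires either citing a sufficiently local version of \cite{Sch08aposteriori} or re-deriving it via a partition of unity $\{\chi_T\}$ subordinate to $\CT_H$: writing $\Vw = \sum_T \chi_T \Vw$ and applying a \emph{stable} regular decomposition patchwise, then checking that the commutator terms $\nabla\chi_T \times$ (lower-order contributions) can be folded into the gradient part $\nabla\theta$ without losing the $H$-scaling. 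The domain-topology hypothesis enters precisely here, since the patchwise regular decomposition (splitting off a gradient) needs the patches $\UN(\Omega_P)$ to be simply connected; everything else — the stability estimates and the commuting diagram — is supplied ready-made by Proposition~\ref{p:proj-pi-H-E}.
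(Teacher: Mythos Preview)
Your plan has a genuine gap at the very first step. An $H$-weighted localized regular decomposition of the form you quote does \emph{not} hold for an arbitrary $\Vw\in\VH_0(\curl,\Omega)$: take $\Vw=\nabla\phi$ for a smooth $\phi\in H^1_0(\Omega)$ of unit size; then $\curl\Vw=0$ forces (up to reshuffling) $\Vz=0$ and $\theta=\phi$, and the required bound $H^{-1}\|\phi\|_{L^2(T)}\lesssim\|\nabla\phi\|_{L^2(\UN^k(T))}$ is a local Poincar\'e inequality that fails on interior elements. What Sch\"oberl actually proves is tied to the error of \emph{his} quasi-interpolant $I_H^S$: he shows $\Vv-I_H^S\Vv=\sum_P\Vv_P$ with each $\Vv_P\in\VH_0(\curl,\Omega_P)$ supported in the vertex patch $\Omega_P$ and with $\|\Vv_P\|_{L^2}$, $\|\curl\Vv_P\|_{L^2}$ controlled by $\Vv$ on a neighboring patch. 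The $H$-scaling enters only because each piece lives on a domain of diameter $\sim H$; it is not a property of a generic $\Vw$, and in particular you cannot invoke it for $\Vw=\Vv-\pi_H^E\Vv$ without first proving the lemma. Your partition-of-unity fallback $\Vw=\sum_T\chi_T\Vw$ does not close this either: $\curl(\chi_T\Vw)=\chi_T\curl\Vw+\nabla\chi_T\times\Vw$, and the commutator is of size $H^{-1}\|\Vw\|$ and is \emph{not} a gradient, so it cannot be ``folded into $\nabla\theta$'' as you suggest.

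The paper's proof supplies the missing bridge between $I_H^S$ and $\pi_H^E$ via the projection property: since $\pi_H^E$ fixes $\mathring{\CN}(\CT_H)$ one has $\Vv-\pi_H^E\Vv=(\id-\pi_H^E)(\Vv-I_H^S\Vv)=\sum_P(\id-\pi_H^E)\Vv_P$. Locality of $\pi_H^E$ keeps each summand supported in $\UN(\Omega_P)$, and the stability \eqref{eq:stabilityL2}--\eqref{eq:stabilitycurl} together with Sch\"oberl's bounds on $\Vv_P$ controls it by $\|\Vv\|_{\VH(\curl,\UN(\Omega_P))}$. Only \emph{then} is the standard (unweighted) regular splitting of \cite{PZ02Schwarz} applied, piece by piece on these small patches; Poincar\'e on a domain of diameter $\sim H$ produces the $H^{-1}$ factors. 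This route also delivers the sharp first line of \eqref{eq:regulardecomp} with only $\|\curl\Vv\|$ on the right: the regular splitting bounds $\Vz_P$ by $\|\curl(\id-\pi_H^E)\Vv_P\|$, which \eqref{eq:stabilitycurl} passes through to $\|\curl\Vv\|$ alone. By contrast, your stated bound for $\Vz$ in terms of the full $\|\Vw\|_{\VH(\curl,\UN^k(T))}$, followed by \eqref{eq:stabilityL2}, would leave an unwanted $\|\Vv\|_{L^2}$ contribution even if the black box were available.
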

Observe that \eqref{eq:regulardecomp} implies the earlier formulated sufficient condition \eqref{motivation:properties-pi-H}.

\begin{proof}
Let $\Vv\in \VH_0(\curl, \Omega)$. 
Denote by $I_H^S:\VH_0(\curl,\Omega)\to \mathring{\CN}(\CT_H)$ the quasi-interpolation operator introduced by Sch\"oberl in \cite{Sch08aposteriori}.
It is shown in \cite[Theorem 6]{Sch08aposteriori} that there exists
a decomposition
\begin{equation}
\label{eq:schoeberlstab-p1}
 \Vv-I_H^S(\Vv) = 
  \sum_{\substack{P \text{ vertex}\\ \text{of }\CT_H}} \Vv_P
\end{equation}
where, for any vertex $P$,
$\Vv_P\in \VH_0(\curl, \Omega_P)$ and $\Omega_P$ the support of the local hat function associated with $P$.
Moreover, \cite[Theorem 6]{Sch08aposteriori} provides the stability
estimates
\begin{equation}\label{eq:schoeberlstab}
 \| \Vv_P \|_{L^2(\Omega_P)} \lesssim \|\Vv\|_{L^2(\UN(\Omega_P))}
\quad\text{and}\quad
 \|\curl \Vv_P \|_{L^2(\Omega_P)} 
               \lesssim \|\curl \Vv\|_{L^2(\UN(\Omega_P))}
\end{equation}
for any vertex $P$.
With these results we deduce, since $\pi_H^E$ is a projection onto the finite element space, that
\begin{align*}
\Vv-\pi_H^E(\Vv)
=\Vv-I_H^S(\Vv)-\pi_H^E(\Vv-I_H^S\Vv)
=\sum_{\substack{P \text{ vertex}\\ \text{of }\CT_H}}(\id-\pi_H^E)(\Vv_P).
\end{align*}
Due to the locality of $\pi_H^E$, we have $(\id-\pi_H^E)(\Vv_P)\in \VH_0(\curl, \UN(\Omega_P))$.
The local stability of $\pi_H^E$, \eqref{eq:stabilityL2} and \eqref{eq:stabilitycurl}, and the stability \eqref{eq:schoeberlstab} imply
\begin{align*}
\|(\id-\pi_H^E)(\Vv_P)\|_{L^2(\UN(\Omega_P))}&\lesssim \|\Vv\|_{L^2(\UN(\Omega_P))}+H\|\curl\Vv\|_{L^2(\UN(\Omega_P))},\\*
\|\curl(\id-\pi_H^E)(\Vv_P)\|_{L^2(\UN(\Omega_P))}&\lesssim \|\curl\Vv\|_{L^2(\UN(\Omega_P))},
\end{align*}
We can now apply the regular splitting to $\Vv_P$ (cf.\ \cite{PZ02Schwarz}), i.e.\ there are $\Vz_P\in \VH^1_0(\UN(\Omega_P))$, $\theta_P\in H^1_0(\UN(\Omega_P))$ such that $\Vv_P=\Vz_P+\nabla \theta_P$ and with the estimates 
\begin{align*}
H^{-1}\|\Vz_P\|_{L^2(\UN(\Omega_P))}+\|\nabla \Vz_P\|_{L^2(\UN(\Omega_P))}&\lesssim \|\curl((\id-\pi_H^E)(\Vv_P))\|_{L^2(\UN(\Omega_P))},\\*
H^{-1}\|\theta_P\|_{L^2(\UN(\Omega_P))}+\|\nabla \theta_P\|_{L^2(\UN(\Omega_P))}&\lesssim \|(\id-\pi_H^E)(\Vv_P)\|_{L^2(\UN(\Omega_P))}.
\end{align*}
Set $\Vz=\sum_P\Vz_P$ and $\theta=\sum_P\theta_P$, which is a regular decomposition of $\Vv-\pi_H^E(\Vv)$. 
The local estimates follows from the foregoing estimates for $\Vv_P$ and the decomposition \eqref{eq:schoeberlstab-p1} which yields
\begin{align*}
H^{-1}\|\Vz\|_{L^2(T)}+\|\nabla \Vz\|_{L^2(T)}&\leq \sum_{\substack{P \text{ vertex}\\ \text{of } T}} 
\left(
H^{-1}\| \Vz_P \|_{L^2(\Omega_P)}+\|\nabla \Vz_P \|_{L^2(\Omega_P)}
\right)\\
&\lesssim 
\sum_{\substack{P \text{ vertex}\\ \text{of } T}}  \|\curl (\id-\pi_H^E)(\Vv_P)\|_{L^2(\UN(\Omega_P))} 
\lesssim \|\curl\Vv\|_{L^2(\UN^3(T))}.
\end{align*}
The local estimate for $\theta$ follows analogously.
\end{proof}

\section{The Corrector Green's Operator}
\label{sec:LODideal}
In this section we introduce an ideal \emph{Corrector Green's Operator} that allows us to derive a decomposition of the exact solution into a coarse part (which is a good approximation in $H^{-1}(\Omega,\cz^3)$) and two different corrector contributions. For simplicity, we let from now on $\mathcal{L} : \VH_0(\curl) \rightarrow \VH_0(\curl)^{\prime}$ denote the differential operator associated with the sesquilinear form $\CB(\cdot,\cdot)$, i.e. $\mathcal{L}(v)(w)=\CB(v,w)$.

Using the Falk-Winter interpolation operator $\pi_H^E$ for the N{\'e}d{\'e}lec elements, we split the space $\VH_0(\curl)$ into the finite, low-dimensional coarse space $\mathring{\CN}(\CT_H)=\mbox{im}(\pi_H^E)$ and a corrector space given as the kernel of $\pi_H^E$, i.e.\ we set $\VW:=\ker (\pi_H^E)\subset \VH_0(\curl)$. This yields the direct sum splitting $\VH_0(\curl)=\mathring{\CN}(\CT_H)\oplus\VW$. Note that $\VW$ is closed since it is the kernel of a continuous (i.e. $\VH(\curl)$-stable) operator. With this the ideal Corrector Green's Operator is defined as follows.
\begin{definition}[Corrector Green's Operator]
For $\mathbf{F} \in \VH_0(\curl)^\prime$, we define the Corrector Green's Operator 
\begin{align}
\label{cor-greens-op}
\Gf: \VH_0(\curl)^{\prime} \rightarrow \VW
\hspace{40pt}
\mbox{by} \hspace{40pt}
\CB(\Gf(\mathbf{F}) , \Vw )=\mathbf{F}(\Vw)\qquad \mbox{for all } \Vw\in \VW.
\end{align}
It is well-defined by the Lax-Milgram-Babu{\v{s}}ka theorem, which is applicable since $\CB(\cdot,\cdot)$ is $\VH_0(\curl)$-elliptic and since $\VW$ is a closed subspace of $\VH_0(\curl)$.
\end{definition}
Using the Corrector Green's Operator we obtain the following decomposition of the exact solution.
\begin{lemma}[Ideal decomposition]
\label{lemma:ideal-decompos}
The exact solution $\Vu\in\VH_0(\curl)$ to \eqref{eq:problem}
and $\Vu_H:=\pi_H^E(\Vu)$ admit the decomposition
\[
\Vu = \Vu_H - (\Gf \circ \mathcal{L})(\Vu_H) + \Gf(\Vf).
\]
\end{lemma}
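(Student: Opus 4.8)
The plan is to verify the decomposition by applying the defining property of $\Gf$ and exploiting the direct sum splitting $\VH_0(\curl)=\mathring{\CN}(\CT_H)\oplus\VW$. First I would split the exact solution as $\Vu=\pi_H^E(\Vu)+(\Vu-\pi_H^E(\Vu))=\Vu_H+\Vw$ where, since $\pi_H^E$ is a projection onto $\mathring{\CN}(\CT_H)$, the remainder $\Vw:=\Vu-\Vu_H$ satisfies $\pi_H^E(\Vw)=\pi_H^E(\Vu)-\pi_H^E(\Vu_H)=\Vu_H-\Vu_H=0$, i.e. $\Vw\in\VW$. So it suffices to identify $\Vw$ with $-(\Gf\circ\mathcal{L})(\Vu_H)+\Gf(\Vf)$.

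Next I would test the equation \eqref{eq:problem} with an arbitrary $\Vpsi=\Vw'\in\VW\subset\VH_0(\curl)$. This gives $\CB(\Vu,\Vw')=(\Vf,\Vw')_{L^2(\Omega)}=\Vf(\Vw')$ (viewing $\Vf$ as an element of $\VH_0(\curl)'$ via the $L^2$-pairing). Expanding $\Vu=\Vu_H+\Vw$ and using sesquilinearity yields $\CB(\Vw,\Vw')=\Vf(\Vw')-\CB(\Vu_H,\Vw')=\Vf(\Vw')-\mathcal{L}(\Vu_H)(\Vw')$ for all $\Vw'\in\VW$. Since $\Vw\in\VW$ and the right-hand side is the action of the functional $\Vf-\mathcal{L}(\Vu_H)\in\VH_0(\curl)'$ on $\Vw'$, the defining relation \eqref{cor-greens-op} together with well-posedness (Lax--Milgram--Babu\v{s}ka on the closed subspace $\VW$, exactly as invoked in the definition of $\Gf$) identifies $\Vw$ uniquely as $\Gf(\Vf-\mathcal{L}(\Vu_H))$. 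By linearity of $\Gf$ this equals $\Gf(\Vf)-\Gf(\mathcal{L}(\Vu_H))=\Gf(\Vf)-(\Gf\circ\mathcal{L})(\Vu_H)$, and substituting back into $\Vu=\Vu_H+\Vw$ gives the claimed decomposition.

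There is no real obstacle here; the statement is essentially a bookkeeping identity once the Galerkin orthogonality against the corrector space $\VW$ is written out. The only points requiring a line of care are: (i) checking that $\Vu-\pi_H^E(\Vu)$ lies in $\VW$, which uses the projection property of $\pi_H^E$ from Proposition \ref{p:proj-pi-H-E}; (ii) noting that $\mathcal{L}$ is well-defined on all of $\VH_0(\curl)$ and that $\mathcal{L}(\Vu_H)$ restricted to $\VW$ is an admissible datum for $\Gf$; and (iii) invoking linearity of $\Gf$, which is immediate from the linearity of the variational problem \eqref{cor-greens-op} in its data. Uniqueness of the solution of the corrector problem is what makes the identification of $\Vw$ legitimate, and this is guaranteed by $\VH_0(\curl)$-ellipticity of $\CB$ on the closed subspace $\VW$.
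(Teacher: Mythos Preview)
Your proof is correct and follows essentially the same approach as the paper: split $\Vu$ via the projection, test the weak formulation against $\VW$, and identify the remainder using the defining property and well-posedness of $\Gf$. The only cosmetic difference is that the paper applies the definition of $\Gf$ separately to $\Vf$ and to $\mathcal{L}(\Vu_H)$ before combining, whereas you apply $\Gf$ to the combined functional and then invoke linearity; the arguments are otherwise identical.
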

\begin{proof}
Since $\VH_0(\curl)=\mathring{\CN}(\CT_H)\oplus\VW$, we can write $\Vu$ uniquely as
\[
\Vu = \pi_H^E(\Vu) + (\id - \pi_H^E)(\Vu) =  \Vu_H + (\id - \pi_H^E)(\Vu),
\]
where $(\id - \pi_H^E)(\Vu) \in \VW$ by the projection property
of $\pi_H^E$.
Using the differential equation for test functions $\Vw\in \VW$ yields that
\begin{align*}
\CB(  (\id - \pi_H^E)(\Vu) , \Vw )= - \CB(  \Vu_H , \Vw ) + (\Vf, \Vw)_{L^2(\Omega)} 
= - \CB( (\Gf \circ \mathcal{L})(\Vu_H) , \Vw ) + \CB( \Gf(\Vf) , \Vw ).
\end{align*}
Since this holds for all $\Vw\in \VW$ and since $\Gf(\Vf) - (\Gf \circ \mathcal{L})(\Vu_H) \in \VW$, we conclude that
\[
(\id - \pi_H^E)(\Vu) = \Gf(\Vf) - (\Gf \circ \mathcal{L})(\Vu_H),
\]
which finishes the proof.
\end{proof}
The Corrector Green's Operator has the following approximation and stability properties, which reveal that its contributions is always negligible in the $\VH(\Div)^\prime$-norm and negligible in the $\VH(\curl)$-norm if applied to a function in $\VH(\Div)$.
\begin{lemma}[Ideal corrector estimates]
\label{lemma:corrector-props}
Any $\mathbf{F} \in \VH_0(\curl)^{\prime}$ satisfies
\begin{align}
\label{green-est-Hcurl-1}
H \| \Gf(\mathbf{F}) \|_{\VH(\curl)} + \| \Gf(\mathbf{F}) \|_{\VH(\Div)^{\prime}} \le C H \alpha^{-1} \| \mathbf{F} \|_{\VH_0(\curl)^{\prime}}.
\end{align}
If $\mathbf{F} = \mathbf{f} \in \VH(\Div)$ we even have
\begin{align}
\label{green-est-Hdiv-1}
H \| \Gf(\mathbf{f}) \|_{\VH(\curl)} + \| \Gf(\mathbf{f}) \|_{\VH(\Div)^{\prime}} \le C H^2 \alpha^{-1}  \| \mathbf{f} \|_{\VH(\Div)}.
\end{align}
Here, the constant $C$ does only depend on the maximum number of neighbors of a coarse element and the generic constants appearing in Lemma \ref{lem:localregulardecomp}.
\end{lemma}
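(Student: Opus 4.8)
The plan is to combine the Galerkin definition of $\Gf(\VF)$ with the regular decomposition of Lemma \ref{lem:localregulardecomp} applied to the \emph{test functions} in $\VW$, which is the key to getting the extra power of $H$. Set $\Ve:=\Gf(\VF)\in\VW$. From ellipticity, $\alpha\|\Ve\|_{\VH(\curl)}^2\le|\CB(\Ve,\Ve)|=|\VF(\Ve)|\le\|\VF\|_{\VH_0(\curl)'}\|\Ve\|_{\VH(\curl)}$, giving $\|\Ve\|_{\VH(\curl)}\le\alpha^{-1}\|\VF\|_{\VH_0(\curl)'}$ immediately; this handles the $H\|\Gf(\VF)\|_{\VH(\curl)}$ term in \eqref{green-est-Hcurl-1}.

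First I would estimate $\|\Ve\|_{\VH(\Div)'}$. Take any $\Vw\in\VH_0(\Div,\Omega)$ — actually, since $\VH(\Div)'$ pairs against $\VH(\Div)$, I need to bound $|(\Ve,\Vg)_{L^2}|$ for $\Vg\in\VH(\Div)$ with $\|\Vg\|_{\VH(\Div)}=1$. Because $\Ve\in\VW=\ker(\pi_H^E)$, write $\Ve=\Ve-\pi_H^E(\Ve)=\Vz+\nabla\theta$ from Lemma \ref{lem:localregulardecomp}; summing the local bounds \eqref{eq:regulardecomp} over $T\in\CT_H$ (using the finite overlap constant $C_{\ol,3}$) yields the global estimates $H^{-1}\|\Vz\|_{L^2(\Omega)}+\|\nabla\Vz\|_{L^2(\Omega)}\lesssim\|\curl\Ve\|_{L^2(\Omega)}$ and $H^{-1}\|\theta\|_{L^2(\Omega)}+\|\nabla\theta\|_{L^2(\Omega)}\lesssim\|\Ve\|_{L^2(\Omega)}+H\|\curl\Ve\|_{L^2(\Omega)}\lesssim\|\Ve\|_{\VH(\curl)}$. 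Then
\[
|(\Ve,\Vg)_{L^2}|=\Bigl|(\Vz,\Vg)_{L^2}-(\theta,\Div\Vg)_{L^2}\Bigr|\le\|\Vz\|_{L^2(\Omega)}\|\Vg\|_{L^2(\Omega)}+\|\theta\|_{L^2(\Omega)}\|\Div\Vg\|_{L^2(\Omega)}\lesssim H\,\|\Ve\|_{\VH(\curl)},
\]
so $\|\Ve\|_{\VH(\Div)'}\lesssim H\,\|\Ve\|_{\VH(\curl)}\lesssim H\alpha^{-1}\|\VF\|_{\VH_0(\curl)'}$, completing \eqref{green-est-Hcurl-1}.

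For \eqref{green-est-Hdiv-1}, the point is that when $\VF=\Vf\in\VH(\Div)$ the defining identity $\CB(\Ve,\Vw)=(\Vf,\Vw)_{L^2}$ can itself be improved by decomposing $\Vw$. Apply Lemma \ref{lem:localregulardecomp} to $\Vw\in\VW$: write $\Vw=\Vw-\pi_H^E(\Vw)=\Vz_w+\nabla\theta_w$ with $H^{-1}\|\Vz_w\|_{L^2(\Omega)}+\|\nabla\Vz_w\|_{L^2}\lesssim\|\curl\Vw\|_{L^2}$ and $H^{-1}\|\theta_w\|_{L^2}\lesssim\|\Vw\|_{\VH(\curl)}$. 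Then $(\Vf,\Vw)_{L^2}=(\Vf,\Vz_w)_{L^2}-(\Div\Vf,\theta_w)_{L^2}$, and each term carries a factor $H$: $|(\Vf,\Vz_w)_{L^2}|\le\|\Vf\|_{L^2}\|\Vz_w\|_{L^2}\lesssim H\|\Vf\|_{L^2}\|\curl\Vw\|_{L^2}$ and $|(\Div\Vf,\theta_w)_{L^2}|\le\|\Div\Vf\|_{L^2}\|\theta_w\|_{L^2}\lesssim H\|\Div\Vf\|_{L^2}\|\Vw\|_{\VH(\curl)}$, so $|\Vf(\Vw)|\lesssim H\|\Vf\|_{\VH(\Div)}\|\Vw\|_{\VH(\curl)}$, i.e. $\|\Vf\|_{\VW'}\lesssim H\|\Vf\|_{\VH(\Div)}$. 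Feeding this sharpened bound into the two estimates already derived — ellipticity gives $\|\Ve\|_{\VH(\curl)}\le\alpha^{-1}\|\Vf\|_{\VW'}\lesssim H\alpha^{-1}\|\Vf\|_{\VH(\Div)}$, and then $\|\Ve\|_{\VH(\Div)'}\lesssim H\|\Ve\|_{\VH(\curl)}\lesssim H^2\alpha^{-1}\|\Vf\|_{\VH(\Div)}$ — yields \eqref{green-est-Hdiv-1}.

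The only mild obstacle is bookkeeping: one must pass from the element-local bounds \eqref{eq:regulardecomp}, which involve the patches $\UN^3(T)$, to global $L^2$-bounds, which costs a factor depending on $C_{\ol,3}$ (the finite overlap of third-order patches) and is absorbed into the generic constant $C$; and one must be slightly careful that the decomposition of Lemma \ref{lem:localregulardecomp} is applied to $\Ve$ and to $\Vw$ only through the identity $\Ve-\pi_H^E\Ve=\Ve$ resp. $\Vw-\pi_H^E\Vw=\Vw$, valid precisely because these functions lie in $\VW=\ker(\pi_H^E)$. No genuinely hard step is expected.
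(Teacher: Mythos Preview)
Your proof is correct and follows essentially the same route as the paper. The only organizational difference is in the second estimate: the paper reuses the already-established bound $\|\Gf(\Vf)\|_{\VH(\Div)'}\lesssim H\|\Gf(\Vf)\|_{\VH(\curl)}$ together with the duality pairing $\alpha\|\Gf(\Vf)\|_{\VH(\curl)}^2\le (\Vf,\Gf(\Vf))_{L^2}\le \|\Gf(\Vf)\|_{\VH(\Div)'}\|\Vf\|_{\VH(\Div)}$, whereas you redo the regular decomposition on a generic test function $\Vw\in\VW$ to bound $\|\Vf\|_{\VW'}$ directly; both amount to the same computation with Lemma~\ref{lem:localregulardecomp} and integration by parts (using $\theta\in H^1_0(\Omega)$).
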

Note that this result is still valid if we replace the $\VH(\Div)^{\prime}$-norm by the $H^{-1}(\Omega,\cz^3)$-norm.
\begin{proof}
The stability estimate $\| \Gf(\mathbf{F}) \|_{\VH(\curl)} \le \alpha^{-1} \| \mathbf{F} \|_{\VH_0(\curl)^{\prime}}$ is obvious. Next, 
with $\Gf(\mathbf{F})\in\VW$ and Lemma \ref{lem:localregulardecomp} we have
\begin{equation}\label{green-est-Hdiv-1-proof}
\begin{aligned}
\| \Gf(\mathbf{F}) \|_{\VH(\Div)^{\prime}}
&= 
\underset{\| \mathbf{v} \|_{\VH(\Div)}=1}{\sup_{\mathbf{v}\in \VH(\Div)}} \left|\int_{\Omega} \Vz \cdot \mathbf{v} - \int_{\Omega} \theta (\nabla \cdot \mathbf{v}) \right|
\\
&
\le
( \| \Vz \|_{L^2(\Omega)}^2 + \| \theta \|_{L^2(\Omega)}^2 )^{1/2}
\le C H \| \Gf(\mathbf{F}) \|_{\VH(\curl)} \le C H  \alpha^{-1} \| \mathbf{F} \|_{\VH_0(\curl)^{\prime}},
\end{aligned}
\end{equation}
which proves \eqref{green-est-Hcurl-1}.
Note that this estimate exploited $\theta \in H^{1}_0(\Omega)$, which is why we do not require the function $\mathbf{v}$ to have a vanishing normal trace. Let us now consider the case that $\mathbf{F} = \mathbf{f} \in \VH(\Div)$. We have
by \eqref{green-est-Hdiv-1-proof} that
\begin{align*}
\alpha \| \Gf( \mathbf{f} ) \|_{\VH(\curl)}^2 \le \| \Gf( \mathbf{f}) \|_{\VH(\Div)^{\prime}} 
\| \mathbf{f} \|_{\VH(\Div)} 
\le C H 
\| \Gf(\mathbf{f}) \|_{\VH(\curl)} \| \mathbf{f} \|_{\VH(\Div)}. 
\end{align*}
We conclude $\| \Gf( \mathbf{f} ) \|_{\VH(\curl)} \le C H \alpha^{-1} \| \mathbf{f} \|_{\VH(\Div)}$. Finally, we can use this estimate again in \eqref{green-est-Hdiv-1-proof} to obtain
\begin{align*}
\| \Gf(\Vf) \|_{\VH(\Div)^{\prime}} \le  C H \| \Gf(\Vf) \|_{\VH(\curl)} \le C H^2 \alpha^{-1} \| \mathbf{f} \|_{\VH(\Div)}.
\end{align*}
This finishes the proof.
\end{proof}
An immediate conclusion of Lemmas \ref{lemma:ideal-decompos} and \ref{lemma:corrector-props} is the following.

\begin{conclusion}
\label{conclusion-ideal-corr-est}
Let $\Vu$ denote the exact solution to \eqref{eq:curlcurl} for $ \mathbf{f} \in \VH(\Div)$. Then with the coarse part $\Vu_H:=\pi_H^E(\Vu)$ and corrector operator $\Kf := - \Gf \circ \mathcal{L}$ it holds
\begin{align*}
H^{-1}\| \Vu - (\id + \Kf)\Vu_H \|_{\VH(\Div)^{\prime}}
+
 \| \Vu - (\id + \Kf)\Vu_H \|_{\VH(\curl)} + \| \Vu - \Vu_H \|_{\VH(\Div)^{\prime}} \le C H \| \mathbf{f} \|_{\VH(\Div)} .
\end{align*}
Here, $C$ only depends on $\alpha$, the mesh regularity and on the constants appearing in Lemma \ref{lem:localregulardecomp}.
\end{conclusion}
\begin{proof}
 The estimates for $\Vu - (\id + \Kf)\Vu_H =\Gf(\Vf)$ directly follow
 from \eqref{green-est-Hdiv-1}.
 For the estimate of $\Vu - \Vu_H =\Kf\Vu_H + \Gf \Vf$, observe that 
\eqref{green-est-Hcurl-1} and 
 Proposition~\ref{p:proj-pi-H-E} imply
 \begin{equation*}
  \|  \Kf\Vu_H \|_{\VH(\Div)^{\prime}}
  \lesssim H
  \|  \CL\Vu_H \|_{\VH_0(\curl)^{\prime}} 
  \lesssim 
  H
  \| \Vu_H \|_{\VH(\curl)} 
  =
  H
  \| \pi_H^E \Vu \|_{\VH(\curl)} 
  \lesssim
  H
  \| \Vu \|_{\VH(\curl)} .
\end{equation*}
 Thus, the proof follows from the stability of the problem 
 and the the triangle inequality.
\end{proof}

It only remains to derive an equation that characterizes $(\id + \Kf)\Vu_H$ as the unique solution of a variational problem. This is done in the following theorem.

\begin{theorem}
We consider the setting of Conclusion \ref{conclusion-ideal-corr-est}. Then $\Vu_H=\pi_H^E(\Vu) \in \mathring{\CN}(\CT_H)$ is characterized as the unique solution to
\begin{align}
\label{ideal-lod}
\CB( \hspace{2pt} (\id + \Kf)\Vu_H , (\id + \Kf^{\ast})\Vv_H \hspace{1pt} ) = ( \Vf, (\id + \Kf^{\ast})\Vv_H )_{L^2(\Omega)} \qquad \mbox{for all } \Vv_H \in \mathring{\CN}(\CT_H).
\end{align}
Here, $\Kf^{\ast}$ is the adjoint operator to $\Kf$. The sesquilinear form $\CB( \hspace{1pt} (\id + \Kf)\hspace{3pt}\cdot \hspace{2pt}, (\id + \Kf^{\ast})\hspace{2pt}\cdot \hspace{2pt} )$ is $\VH(\curl)$-elliptic on $\mathring{\CN}(\CT_H)$.
\end{theorem}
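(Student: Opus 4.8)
The plan is to derive \eqref{ideal-lod} directly from the ideal decomposition in Lemma \ref{lemma:ideal-decompos} together with the defining property of the Corrector Green's Operator, and then to establish the ellipticity claim separately via a Galerkin-orthogonality-type argument. First I would recall that by Lemma \ref{lemma:ideal-decompos} we have $\Vu = (\id + \Kf)\Vu_H + \Gf(\Vf)$ with $\Kf = -\Gf\circ\CL$, so that for any $\Vv_H\in\mathring{\CN}(\CT_H)$,
\[
\CB(\Vu, (\id+\Kf^\ast)\Vv_H) = (\Vf,(\id+\Kf^\ast)\Vv_H)_{L^2(\Omega)}
\]
by the original problem \eqref{eq:problem} (note $(\id+\Kf^\ast)\Vv_H\in\VH_0(\curl)$ is a legitimate test function). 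It remains to show that the term $\CB(\Gf(\Vf),(\id+\Kf^\ast)\Vv_H)$ vanishes, equivalently that $\CB((\id+\Kf)\Vu_H,(\id+\Kf^\ast)\Vv_H) = \CB(\Vu,(\id+\Kf^\ast)\Vv_H)$. The key observation is that by the definition of the adjoint, $\CB(\Gf(\Vf),\Kf^\ast\Vv_H) = \overline{\CB(\Kf\,?\,,\dots)}$ — more cleanly, one uses that for any $\Vw\in\VW$, $\CB(\Vw,(\id+\Kf^\ast)\Vv_H) = \CB(\Vw,\Vv_H) + \CB(\Vw,\Kf^\ast\Vv_H)$, and by definition of $\Kf^\ast$ as the adjoint with respect to $\CB$ (or more precisely, tracing through $\Kf = -\Gf\circ\CL$ and the definition \eqref{cor-greens-op} of $\Gf$), $\CB(\Vw,\Kf^\ast\Vv_H)$ cancels $\CB(\Vw,\Vv_H)$ precisely because $\Gf$ solves a problem posed over $\VW$. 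Thus $\CB(\Vw,(\id+\Kf^\ast)\Vv_H)=0$ for all $\Vw\in\VW$, and since $\Gf(\Vf)\in\VW$, the extra term drops, giving \eqref{ideal-lod}. Uniqueness then follows once ellipticity is shown, since \eqref{ideal-lod} is a square system on the finite-dimensional space $\mathring{\CN}(\CT_H)$.

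For the ellipticity statement, I would test \eqref{ideal-lod}'s bilinear form with $\Vv_H=\Vw_H$ and use that $(\id+\Kf)\Vw_H$ and $(\id+\Kf^\ast)\Vw_H$ differ by an element of $\VW$ on which we have control. The cleanest route: observe that for $\Vw_H\in\mathring{\CN}(\CT_H)$, the element $(\id+\Kf)\Vw_H\in\VH_0(\curl)$ satisfies $\pi_H^E((\id+\Kf)\Vw_H)=\Vw_H$ since $\mathrm{im}(\Kf)\subset\VW=\ker(\pi_H^E)$; hence the map $\Vw_H\mapsto(\id+\Kf)\Vw_H$ is injective with a bounded left inverse $\pi_H^E$. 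Then
\[
|\CB((\id+\Kf)\Vw_H,(\id+\Kf^\ast)\Vw_H)|
\]
should be bounded below by $\alpha\|(\id+\Kf)\Vw_H\|_{\VH(\curl)}^2$ up to handling the mismatch between $\Kf$ and $\Kf^\ast$; since $\CB(\Vw,(\id+\Kf^\ast)\Vw_H)=0$ for $\Vw\in\VW$ as shown above (applied with $\Vw = \Kf\Vw_H - \Kf^\ast\Vw_H\in\VW$... more carefully, $\CB((\id+\Kf)\Vw_H,(\id+\Kf^\ast)\Vw_H) = \CB((\id+\Kf)\Vw_H,\Vw_H) + \CB((\id+\Kf)\Vw_H,\Kf^\ast\Vw_H)$, and the second term equals $\overline{\CB(\Kf\Vw_H,(\id+\Kf)\Vw_H)}$ by adjointness... ) one arrives at $\CB((\id+\Kf)\Vw_H,(\id+\Kf^\ast)\Vw_H)=\CB((\id+\Kf)\Vw_H,(\id+\Kf)\Vw_H)$, whose modulus is $\geq\alpha\|(\id+\Kf)\Vw_H\|_{\VH(\curl)}^2\geq\alpha\,C_\pi^{-2}\|\Vw_H\|_{\VH(\curl)}^2$ by the stability of $\pi_H^E$ from Proposition \ref{p:proj-pi-H-E}.

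The main obstacle I anticipate is keeping the adjoint bookkeeping straight: $\Kf^\ast$ is the adjoint of $\Kf$ with respect to which pairing, and the identity $\CB(\Vw,(\id+\Kf^\ast)\Vv_H)=0$ for $\Vw\in\VW$ must be justified carefully — it hinges on unwinding $\Kf^\ast = -(\Gf\circ\CL)^\ast = -\CL^\ast\circ\Gf^\ast$ and on $\Gf$ being defined by testing against $\VW$, so that $\CL\Vv_H$ restricted to $\VW$ is exactly what $\Gf$ inverts. Once this single algebraic identity is nailed down, both the characterization \eqref{ideal-lod} and the ellipticity (and hence uniqueness) follow from it combined with Lemma \ref{lemma:ideal-decompos} and the $\VH(\curl)$-stability of $\pi_H^E$; the remaining manipulations are routine. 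I would also remark that the equivalence of $\|(\id+\Kf)\Vw_H\|_{\VH(\curl)}$ and $\|\Vw_H\|_{\VH(\curl)}$ on $\mathring{\CN}(\CT_H)$ is worth stating explicitly, as it is exactly what promotes coercivity of the bilinear form on the image space to coercivity on the coarse space itself.
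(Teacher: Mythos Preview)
Your proposal is correct and follows essentially the same route as the paper: both use the ideal decomposition $\Vu=(\id+\Kf)\Vu_H+\Gf(\Vf)$, the orthogonality relations stemming from the definition of $\Gf$ on $\VW$ (the paper uses the primal identity $\CB((\id+\Kf)\Vu_H,\Vw)=0$ for $\Vw\in\VW$ while you emphasize the dual identity $\CB(\Vw,(\id+\Kf^\ast)\Vv_H)=0$, which are two sides of the same coin), and the norm equivalence $\|\Vw_H\|_{\VH(\curl)}\simeq\|(\id+\Kf)\Vw_H\|_{\VH(\curl)}$ via $\pi_H^E$ for ellipticity and uniqueness. Your ellipticity step is in fact more explicit than the paper's, which simply cites the norm equivalence without writing out the reduction $\CB((\id+\Kf)\Vw_H,(\id+\Kf^\ast)\Vw_H)=\CB((\id+\Kf)\Vw_H,(\id+\Kf)\Vw_H)$; just be sure to state clearly that $\Kf^\ast=-\Gf^\ast\circ\CL^\ast$ with $\Gf^\ast$ the Green's operator for the adjoint form on $\VW$, so that $\Kf^\ast\Vv_H\in\VW$ and the dual orthogonality follows immediately from the defining equation of $\Gf^\ast$.
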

Observe that we have the simplification $\mathcal{K}^{\ast}=\mathcal{K}$ if the differential operator $\mathcal{L}$ is self-adjoint as it is typically the case for $\VH(\curl)$-problems.
\begin{proof}
Since Lemma \ref{lemma:ideal-decompos} guarantees $\Vu = \Vu_H - (\Gf \circ \mathcal{L})(\Vu_H) + \Gf(\Vf)$, the weak formulation \eqref{eq:problem} yields
\begin{align*}
\CB( \Vu_H - (\Gf \circ \mathcal{L})(\Vu_H) + \Gf(\Vf) , \Vv_H ) = ( \Vf, \Vv_H )_{L^2(\Omega)} \qquad \mbox{for all } \Vv_H \in \mathring{\CN}(\CT_H).
\end{align*}
We observe that by definition of $\Gf$ we have
\begin{align*}
\CB( \Gf(\Vf) , \Vv_H ) = ( \Vf , (\Gf \circ \mathcal{L})^{\ast}\Vv_H )_{L^2(\Omega)}
\end{align*}
and
\begin{align*}
\CB( \Vu_H - (\Gf \circ \mathcal{L})(\Vu_H)  , (\Gf \circ \mathcal{L})^{\ast}\Vv_H ) = 0.
\end{align*}
Combining the three equations shows that $(\id + \Kf)\Vu_H$ is a solution to \eqref{ideal-lod}. The uniqueness follows from the
following norm equivalence
\begin{align*}
\| \Vu_H \|_{\VH(\curl)} = \| \pi_H^E((\id + \Kf)\Vu_H) \|_{\VH(\curl)} \le C \| (\id + \Kf)\Vu_H \|_{\VH(\curl)} 
\le C \| \Vu_H \|_{\VH(\curl)}.
\end{align*}
This is also the reason why the $\VH(\curl)$-ellipticity of $\CB( \cdot, \cdot)$ implies the $\VH(\curl)$-ellipticity of $\CB( \hspace{1pt} (\id + \Kf)\hspace{3pt}\cdot \hspace{2pt}, (\id + \Kf^{\ast})\hspace{2pt}\cdot \hspace{2pt} )$ on $\mathring{\CN}(\CT_H)$.
\end{proof}
\textbf{Numerical homogenization}. Let us summarize the most important findings and relate them to (numerical) homogenization. We defined a \emph{homogenization scale} through the coarse FE space $\mathring{\CN}(\CT_H)$. We proved that 
there exists a numerically homogenized function $\Vu_H \in \mathring{\CN}(\CT_H)$ which approximates the exact solution well in $\VH(\Div)^{\prime}$ with
\begin{align*}
 \| \Vu - \Vu_H \|_{\VH(\Div)^{\prime}} \le C H \| \mathbf{f} \|_{\VH(\Div)}.
\end{align*}
From the periodic homogenization theory (cf. Section \ref{sec:motivation}) we know that this is the best we can expect and that $\Vu_H$ is typically not a good $L^2$-approximation due to the large kernel of the curl-operator.
Furthermore, we showed the existence of an $\VH(\curl)$-stable corrector operator $\Kf: \mathring{\CN}(\CT_H) \rightarrow \VW$ that corrects the homogenized solution in such a way that the approximation is also accurate in $\VH(\curl)$ with
\begin{align*}
 \| \Vu - (\id + \Kf)\Vu_H \|_{\VH(\curl)} \le C H \| \mathbf{f} \|_{\VH(\Div)}.
\end{align*}
Since $\Kf = - \Gf \circ \mathcal{L}$, we know that we can characterize $\Kf (\Vv_H) \in \VW$ as the unique solution to the (ideal) corrector problem 
\begin{align}
\label{ideal-corrector-problem}
\CB( \Kf (\Vv_H) , \Vw )=- \CB( \Vv_H , \Vw ) \qquad \mbox{for all } \Vw\in \VW.
\end{align}
The above result shows that $(\id + \Kf)\Vu_H$ approximates the analytical solution with linear rate without any assumptions on the regularity of the problem or the structure of the coefficients that define $\CB(\cdot,\cdot)$. Also it does not assume that the mesh resolves the possible fine-scale features of the coefficient.
On the other hand, the ideal corrector problem \eqref{ideal-corrector-problem} is global, which significantly limits its practical usability in terms of real computations.

However, as we will see next, the corrector Green's function associated with problem \eqref{cor-greens-op} shows an exponential decay measured in units of $H$. This property will allow us to split the global corrector problem \eqref{ideal-corrector-problem} into several smaller problems on subdomains, similar to how we encounter it in classical homogenization theory. We show the exponential decay of the corrector Green's function indirectly through the properties of its corresponding Green's operator $\Gf$. The localization is established in Section \ref{sec:LOD}, whereas we prove the decay in Section \ref{sec:decaycorrectors}.

\section{Quasi-local numerical homogenization}
\label{sec:LOD}
In this section we describe how the ideal corrector $\Kf$ can be approximated by a sum of local correctors, without destroying the overall approximation order. This is of central importance for an efficient computability. Furthermore, it also reveals that the new corrector is a quasi-local operator, which is in line with homogenization theory. 

We start with quantifying the decay properties of the Corrector Green's Operator in Section \ref{subsec:idealapprox}. In Section \ref{subsec:LODlocal} we apply the result to our numerical homogenization setting and state the error estimates for the \quotes{localized} corrector operator. We close with a few remarks on a fully discrete realization of the localized corrector operator in Section \ref{subsec:discreteLOD}.
\subsection{Exponential decay and localized corrector}
\label{subsec:idealapprox}
The property that $\Kf$ can be approximated by local correctors is directly linked to the decay properties of the Green's function associated with problem \eqref{cor-greens-op}. These decay properties can be quantified explicitly by measuring distances between points in units of the coarse mesh size $H$. We have the following result, which states -- loosely speaking -- in which distance from the support of a source term $\mathbf{F}$, becomes the $\VH(\curl)$-norm of $\Gf(\mathbf{F})$ negligibly small. For that, recall the definition of the element patches from the beginning of Section \ref{sec:intpol}, where $\UN^m(T)$ denotes the patch that consists of a coarse element $T \in  \CT_H$ and $m$ layers of coarse elements around it. A proof of the following proposition is given in Section \ref{sec:decaycorrectors}. 
\begin{proposition}
\label{prop:decaycorrector1}
Let $T\in \CT_H$ denote a coarse element and $m\in \nz$ a number of layers. Furthermore, let $\mathbf{F}_T \in \VH_0(\curl)^{\prime}$ denote a local source functional in the sense that $\mathbf{F}_T(\Vv)=0$ for all $\Vv \in \VH_0(\curl)$ with $\supp(\Vv) \subset \Omega \setminus T$. Then there exists $0<\tilde{\beta}<1$, independent of $H$, $T$, $m$ and $\mathbf{F}_T$, such that 
\begin{equation}
\label{eq:decaycorrector1}
\| \Gf(\mathbf{F}_T) \|_{\VH(\curl, \Omega\setminus \UN^m(T))}\lesssim \tilde{\beta}^m\| \mathbf{F}_T \|_{\VH_0(\curl)^{\prime}}.
\end{equation}
\end{proposition}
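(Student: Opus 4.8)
The plan is to prove the exponential decay via the standard Caccioppoli-type / iterated-cutoff argument that is by now canonical for LOD-type estimates (cf.\ \cite{MP14LOD,Sch08aposteriori}), adapted to the $\VH(\curl)$-setting. Write $\Vphi := \Gf(\mathbf{F}_T) \in \VW$. Fix $m$ and set $R := \UN^m(T)$, $S := \UN^{m+1}(T) \setminus \UN^{m-1}(T)$ the annular ``shell'' of coarse elements; since $\mathbf{F}_T$ is supported in $T$, we have $\CB(\Vphi,\Vw) = 0$ for every $\Vw \in \VW$ supported in $\Omega \setminus T$, in particular for every $\Vw \in \VW$ supported outside $\UN^{m-1}(T)$. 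The idea is to test this relation with $\Vw$ being (an $\pi_H^E$-corrected version of) $\eta\Vphi$ for a suitable $\CT_H$-piecewise-affine cutoff $\eta \in \CS^1(\CT_H)$ with $\eta \equiv 1$ outside $\UN^{m}(T)$ and $\eta \equiv 0$ on $\UN^{m-2}(T)$, so that $\eta\Vphi$ lives outside the support of $\mathbf{F}_T$. The catch is that $\eta\Vphi \notin \VW$ in general, so one must use $\Vw := (\id - \pi_H^E)(\eta\Vphi)$, which lies in $\VW$ and, by the locality of $\pi_H^E$, is still supported in a fixed-width enlargement of the shell. One then estimates $\CB(\Vphi, \Vw)$ from below by $\alpha$ times the $\VH(\curl)$-energy of $\Vphi$ on the ``far'' region $\Omega \setminus \UN^{m+2}(T)$ (say), and from above by the energy of $\Vphi$ on the shell $S$ alone, using $\curl(\eta\Vphi) = \nabla\eta \times \Vphi + \eta\curl\Vphi$, the $L^\infty$-bound $\|\nabla\eta\| \lesssim H^{-1}$, the local stability bounds \eqref{eq:stabilityL2}--\eqref{eq:stabilitycurl} for $\pi_H^E$, and the commutation property to control the correction term. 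This yields a one-step inequality of the form
\[
\|\Vphi\|_{\VH(\curl,\,\Omega\setminus\UN^{m+K}(T))}^2 \;\lesssim\; \|\Vphi\|_{\VH(\curl,\,S)}^2 \;=\; \|\Vphi\|_{\VH(\curl,\,\Omega\setminus\UN^{m-1}(T))}^2 - \|\Vphi\|_{\VH(\curl,\,\Omega\setminus\UN^{m+1}(T))}^2
\]
for a fixed offset $K$ depending only on the locality radius of $\pi_H^E$ and the mesh regularity.

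From here the argument is routine bookkeeping. Denoting $a_j := \|\Vphi\|_{\VH(\curl,\,\Omega\setminus\UN^{j}(T))}^2$, the one-step bound reads $a_{m+K} \lesssim a_{m-1} - a_{m+1}$, i.e.\ $a_{m+K} \le C(a_{m-1} - a_{m+1})$ with $C$ independent of $m$; since $(a_j)$ is non-increasing this gives $a_{m+K} \le \frac{C}{C+1}\,a_{m-1}$, and iterating over steps of size $K+1$ produces $a_m \le \tilde\beta^{2m} a_0$ for a suitable $\tilde\beta = \tilde\beta(C,K) \in (0,1)$, with $a_0 = \|\Vphi\|_{\VH(\curl)}^2 \le \alpha^{-2}\|\mathbf{F}_T\|_{\VH_0(\curl)'}^2$ by the global stability estimate from Lemma~\ref{lemma:corrector-props} (the bound $\|\Gf(\mathbf{F})\|_{\VH(\curl)} \le \alpha^{-1}\|\mathbf{F}\|_{\VH_0(\curl)'}$). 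Taking square roots gives exactly \eqref{eq:decaycorrector1}, with $\tilde\beta$ independent of $H$, $T$, $m$ and $\mathbf{F}_T$ since $\alpha$, $C_\pi$, $C_B$ and the overlap constants $C_{\ol,m}$ (which grow only polynomially in $m$, hence are dominated by $\tilde\beta^{-m}$) are all $H$-independent.

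The main obstacle — and the only place where the $\VH(\curl)$-structure makes this genuinely harder than the scalar elliptic LOD — is the treatment of the correction term $\pi_H^E(\eta\Vphi)$ in the test function. One cannot simply use $\eta\Vphi$ because it is not in $\ker\pi_H^E$; subtracting its $\pi_H^E$-projection reintroduces a term $\CB(\Vphi, \pi_H^E(\eta\Vphi))$ that must be shown to be controlled by the shell energy. This requires combining the quasi-locality of $\pi_H^E$ (so that $\pi_H^E(\eta\Vphi)$ is supported near $S$, since $\eta\Vphi$ is), the local stability bounds \eqref{eq:stabilityL2}--\eqref{eq:stabilitycurl} applied on patches of elements in/near $S$, and crucially the observation that $\eta\Vphi$ itself is small on $S$ only in a weighted sense — so one must track both the $L^2$-part (weighted by $H^{-1}$) and the $\curl$-part, exactly as in the regular-decomposition bookkeeping of Lemma~\ref{lem:localregulardecomp}. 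A clean way to organize this is to first record a lemma stating that for $\eta \in \CS^1(\CT_H)$ and $\Vw \in \VH_0(\curl)$ one has $\|\pi_H^E(\eta\Vw) - \eta\,\pi_H^E(\Vw)\|_{\VH(\curl,T')} \lesssim H^{-1}\|\Vw\|_{L^2(\UN^2(T'))} + \|\curl\Vw\|_{L^2(\UN^2(T'))}$ on each $T'$ meeting $\supp\nabla\eta$, and vanishing elsewhere; feeding this into the cutoff computation absorbs all the correction-term contributions into the shell energy and closes the estimate.
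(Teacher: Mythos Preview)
Your iteration strategy is right, but the test function $\Vw=(\id-\pi_H^E)(\eta\Vphi)$ does not yield a one-step bound with an $H$-independent constant. The term $\nabla\eta\times\Vphi$ from $\curl(\eta\Vphi)$ contributes $H^{-1}\|\curl\Vphi\|_{L^2(\CR)}\|\Vphi\|_{L^2(\CR)}$, and your own commutator bound on $\pi_H^E(\eta\Vphi)$ carries the same $H^{-1}\|\Vphi\|_{L^2}$ factor. In the scalar $H^1$-LOD this is harmless because membership in the kernel of the Cl\'ement operator gives the local Poincar\'e inequality $\|\Vphi\|_{L^2(T')}\lesssim H\|\nabla\Vphi\|_{L^2(\UN(T'))}$, which absorbs the $H^{-1}$. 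In $\VH(\curl)$ no such inequality is available: $\ker\pi_H^E$ contains gradients $\nabla\theta$ with $\curl\nabla\theta=0$, so $\|\Vphi\|_{L^2}$ is in general not controlled by $H\|\curl\Vphi\|_{L^2}$. Your one-step estimate therefore reads $a_{m+K}\le C H^{-1}(a_{m-1}-a_{m+1})$, whose contraction factor $CH^{-1}/(CH^{-1}+1)\to 1$ as $H\to 0$, so $\tilde\beta$ fails to be independent of $H$.

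The paper's remedy is to push the regular decomposition of Lemma~\ref{lem:localregulardecomp} \emph{into} the test function: with $\Vphi=\Vz+\nabla\theta$ one tests against $(\id-\pi_H^E)\bigl(\eta\Vz+\nabla(\eta\theta)\bigr)$ rather than $(\id-\pi_H^E)(\eta\Vphi)$. The two differ only by $\theta\nabla\eta$, but now the cross term from the $\curl$-part is $\nabla\eta\times\Vz$ instead of $\nabla\eta\times\Vphi$, and the weighted bound $H^{-1}\|\Vz\|_{L^2(T')}\lesssim\|\curl\Vphi\|_{L^2(\UN^3(T'))}$ from \eqref{eq:regulardecomp} absorbs the $H^{-1}$; the leftover $\theta\nabla\eta$ enters only the zero-order term and is controlled via $H^{-1}\|\theta\|_{L^2(T')}\lesssim\|\Vphi\|_{\VH(\curl,\UN^3(T'))}$. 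You mention Lemma~\ref{lem:localregulardecomp} only as bookkeeping for the correction term $\pi_H^E(\eta\Vphi)$; in fact it must enter the construction of the test function itself, so that the gradient part is split off \emph{before} the cutoff is applied.
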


In order to use this result to approximate $\Kf(\Vv_H) = - (\Gf \circ \mathcal{L})\Vv_H$ (which has a nonlocal argument), we introduce,
for any $T\in\CT_H$, localized differential operators 
$\CL_T:\VH(\curl,T)\to\VH(\curl,\Omega)'$
with
\[\langle \mathcal{L}_T(\Vu), \Vv \rangle := \CB_T(\Vu, \Vv ),\]
where $\CB_T(\cdot, \cdot )$ denotes the restriction of $\CB(\cdot, \cdot )$ to the element $T$. By linearity of $\Gf$ we have that 
\[\Gf \circ \mathcal{L} = \sum_{T \in \CT_H} \Gf \circ \mathcal{L}_T\]
and consequently we can write
\[
\Kf( \Vv_H ) =  \sum_{T \in \CT_H} \Gf( \mathbf{F}_T ), \qquad \mbox{with } \mathbf{F}_T:= - \mathcal{L}_T(\Vv_H).
\]
Obviously, $\Gf( \mathbf{F}_T )$ fits into the setting of Proposition \ref{prop:decaycorrector1}. This suggests to truncate the individual computations of $\Gf( \mathbf{F}_T )$ to a small patch $\UN^m(T)$ and then collect the results to construct a global approximation for the corrector. Typically, $m$ is referred to as \emph{oversampling parameter}. The strategy is detailed in the following definition.
\begin{definition}[Localized Corrector Approximation]
\label{de:loc-correctors}
For an element $T\in \CT_H$ we define the element patch $\Omega_T:=\UN^m(T)$ of order $m\in \nz$. Let 
$\mathbf{F} \in \VH_0(\curl)^{\prime}$ be the sum of local functionals with $\mathbf{F} =\sum_{T\in \CT_H} \mathbf{F}_T$, where $\mathbf{F}_T \in \VH_0(\curl)^{\prime}$ is as in Proposition \ref{prop:decaycorrector1}. Furthermore, let $\VW(\Omega_T)\subset \VW$ denote the space of functions from $\VW$ that vanish outside $\Omega_T$, i.e.
\[\VW(\Omega_T)=\{\Vw\in\VW|\Vw=0 \text{ \textrm{outside} }\Omega_T\}.\]
We call $\Gf_{T,m}( \mathbf{F}_T ) \in \VW(\Omega_T)$ the \emph{localized corrector} if it solves
\begin{equation}
\label{eq:correctorlocal}
\CB( \Gf_{T,m}( \mathbf{F}_T ) , \Vw )=\mathbf{F}_T(\Vw)\qquad \mbox{for all } \Vw\in \VW(\Omega_T).
\end{equation}
With this, the global corrector approximation is given by
\begin{align*}
\Gf_{m}(\mathbf{F}) := \sum_{T\in \CT_H} \Gf_{T,m}( \mathbf{F}_T ).
\end{align*}
\end{definition}
Observe that problem \eqref{eq:correctorlocal} is only formulated on the patch $\Omega_T$ and that it admits a unique solution by the Lax-Milgram-Babu{\v{s}}ka theorem.

Based on decay properties stated in Proposition \ref{prop:decaycorrector1}, we can derive the following error estimate for the difference between the exact corrector $\Gf(\mathbf{F})$ and its approximation $\Gf_{m}(\mathbf{F})$ obtained by an $m$th level truncation. The proof of the following result is again postponed to Section \ref{sec:decaycorrectors}.
\begin{theorem}
\label{thm:errorcorrectors}
We consider the setting of Definition \ref{de:loc-correctors} with ideal Green's Corrector $\Gf(\mathbf{F})$ and its $m$th level truncated approximation $\Gf_{m}(\mathbf{F})$. Then there exist constants $C_{d}>0$ and $0<\beta<1$ (both independent of $H$ and $m$) such that
\begin{align}
\label{eq:errorcorrector}
\| \Gf(\mathbf{F}) - \Gf_{m}(\mathbf{F}) \|_{\VH(\curl)}&\leq C_{d} \sqrt{C_{\ol,m}}\,\beta^m \left( \sum_{T\in \CT_H} \| \mathbf{F}_T \|_{\VH_0(\curl)^{\prime}}^2 \right)^{1/2}
\end{align}
and
\begin{align}
\label{eq:errorcorrector-2}
\| \Gf(\mathbf{F}) - \Gf_{m}(\mathbf{F}) \|_{\VH(\Div)^{\prime}}&\leq C_{d} \sqrt{C_{\ol,m}}\, \beta^m H \left( \sum_{T\in \CT_H} \| \mathbf{F}_T \|_{\VH_0(\curl)^{\prime}}^2 \right)^{1/2}.
\end{align}
\end{theorem}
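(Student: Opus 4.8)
The plan is to reduce the global error to a sum of element-wise errors $\Gf(\mathbf{F}_T) - \Gf_{T,m}(\mathbf{F}_T)$ and then control each such difference by the exponential decay of Proposition \ref{prop:decaycorrector1}. First I would fix $T \in \CT_H$ and analyze $e_T := \Gf(\mathbf{F}_T) - \Gf_{T,m}(\mathbf{F}_T)$. The key observation is a Galerkin (near-)orthogonality: both $\Gf(\mathbf{F}_T)$ and $\Gf_{T,m}(\mathbf{F}_T)$ solve the same variational problem with form $\CB$, the former tested against all of $\VW$ and the latter only against $\VW(\Omega_{m+1}(T))$ — wait, more precisely $\VW(\Omega_T)$ with $\Omega_T = \UN^m(T)$. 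So $\CB(e_T, \Vw) = 0$ for all $\Vw \in \VW(\Omega_T)$. Since $\CB$ is $\VH_0(\curl)$-elliptic on $\VW$, I get $\alpha \|e_T\|_{\VH(\curl)}^2 \le |\CB(e_T, e_T)| = |\CB(e_T, \Gf(\mathbf{F}_T) - \Vw)|$ for any $\Vw \in \VW(\Omega_T)$, hence $\|e_T\|_{\VH(\curl)} \lesssim \inf_{\Vw \in \VW(\Omega_T)} \|\Gf(\mathbf{F}_T) - \Vw\|_{\VH(\curl)}$.

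Next I would bound this best-approximation by a cut-off argument. Using a suitable $\CT_H$-piecewise-smooth cutoff function $\eta$ that equals $1$ on $\UN^{m-1}(T)$ and vanishes outside $\UN^m(T)$, one forms a candidate $\Vw$ that agrees with $\Gf(\mathbf{F}_T)$ on the inner patch and is damped to zero near the boundary of $\Omega_T$. Because $\Gf(\mathbf{F}_T)$ is not in general a gradient, I cannot simply multiply by $\eta$ and stay in $\VW$; instead I would use the regular decomposition (Lemma \ref{lem:localregulardecomp}) of $\Gf(\mathbf{F}_T)$ — or rather of $(\id - \pi_H^E)$ applied to the truncated field — to project the cut-off field back into $\ker(\pi_H^E)$ while keeping its support in $\Omega_T$ and paying only an $H$-independent constant. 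The error committed in this truncation is controlled by $\|\Gf(\mathbf{F}_T)\|_{\VH(\curl, \Omega \setminus \UN^{m-1}(T))}$ plus commutator terms ($\curl \eta$ times $\Gf(\mathbf{F}_T)$), all of which live in the annular region $\UN^m(T) \setminus \UN^{m-2}(T)$ and are therefore $\lesssim \|\Gf(\mathbf{F}_T)\|_{\VH(\curl, \Omega \setminus \UN^{m-3}(T))}$. Proposition \ref{prop:decaycorrector1} (applied with $m-3$ layers, which only changes $\tilde\beta^m$ by a constant factor) then gives $\|e_T\|_{\VH(\curl)} \lesssim \tilde\beta^m \|\mathbf{F}_T\|_{\VH_0(\curl)'}$, i.e. the single-element estimate with some $\beta \in (\tilde\beta, 1)$.

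For the global estimate I would write $\Gf(\mathbf{F}) - \Gf_m(\mathbf{F}) = \sum_{T \in \CT_H} e_T$ and note that, again by the decay of Proposition \ref{prop:decaycorrector1}, each $e_T$ is (up to an exponentially small remainder absorbed into $\beta^m$) supported in $\UN^{m+1}(T)$ or so; hence at any point only $O(C_{\ol,m})$ of the summands overlap. A Cauchy–Schwarz argument over these bounded-overlap families yields $\|\sum_T e_T\|_{\VH(\curl)}^2 \lesssim C_{\ol,m} \sum_T \|e_T\|_{\VH(\curl)}^2 \lesssim C_{\ol,m} \beta^{2m} \sum_T \|\mathbf{F}_T\|_{\VH_0(\curl)'}^2$, which is \eqref{eq:errorcorrector} after taking the square root. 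The $\VH(\Div)'$-estimate \eqref{eq:errorcorrector-2} follows by exactly the argument used in \eqref{green-est-Hdiv-1-proof}: apply Lemma \ref{lem:localregulardecomp} to $\Gf(\mathbf{F}) - \Gf_m(\mathbf{F}) \in \VW$ to gain the extra factor $H$, $\|\Gf(\mathbf{F}) - \Gf_m(\mathbf{F})\|_{\VH(\Div)'} \lesssim H \|\Gf(\mathbf{F}) - \Gf_m(\mathbf{F})\|_{\VH(\curl)}$, and then insert \eqref{eq:errorcorrector}.

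The main obstacle I anticipate is the cut-off step: multiplying an $\VH(\curl)$ field that lies in $\ker(\pi_H^E)$ by a scalar cutoff does not land back in $\ker(\pi_H^E)$, so one must correct it using the Falk–Winther projection and the local regular decomposition, and verify that this correction stays supported in $\Omega_T$ and is bounded by the annular norm of $\Gf(\mathbf{F}_T)$ with an $H$-independent constant — the bookkeeping of supports across the layers $\UN^{m-3}, \dots, \UN^m$ and the polynomial-in-$m$ growth of $C_{\ol,m}$ (harmless against $\beta^m$) is where the technical care is needed.
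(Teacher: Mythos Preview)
Your local estimate (the element-wise bound $\|e_T\|_{\VH(\curl)} \lesssim \tilde\beta^m \|\mathbf{F}_T\|_{\VH_0(\curl)'}$ via C\'ea's lemma, the cut-off, the regular decomposition of $\Gf(\mathbf{F}_T)\in\VW$, and re-projection by $(\id-\pi_H^E)$) and your derivation of \eqref{eq:errorcorrector-2} from \eqref{eq:errorcorrector} are both correct and essentially match the paper.

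The gap is in your global summation. The functions $e_T = \Gf(\mathbf{F}_T) - \Gf_{T,m}(\mathbf{F}_T)$ are \emph{not} compactly supported: while $\Gf_{T,m}(\mathbf{F}_T)$ lives in $\UN^m(T)$, the ideal corrector $\Gf(\mathbf{F}_T)$ has global support and only decays exponentially. Hence the bounded-overlap Cauchy--Schwarz step $\|\sum_T e_T\|^2 \lesssim C_{\ol,m}\sum_T\|e_T\|^2$ is not justified as written. Your parenthetical ``up to an exponentially small remainder absorbed into $\beta^m$'' hides the real difficulty: if you split $e_T=e_T^{\mathrm{loc}}+r_T$ with $e_T^{\mathrm{loc}}$ supported in $\UN^{m+1}(T)$ and $\|r_T\|\lesssim\tilde\beta^m\|\mathbf{F}_T\|$, the localized parts sum fine, but the triangle inequality on $\sum_T r_T$ yields an $\ell^1$-sum $\tilde\beta^m\sum_T\|\mathbf{F}_T\|$, which is off from the target $\ell^2$-sum by a factor $\sqrt{\#\CT_H}\sim H^{-3/2}$. (This can be repaired by a further annular decomposition of each tail and a geometric-series argument exploiting the polynomial growth of $C_{\ol,m}$, but that is real additional work you have not indicated.)

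The paper takes a different and cleaner route for the summation: set $\Vw=\sum_T e_T$, use ellipticity to write $\alpha\|\Vw\|_{\VH(\curl)}^2\le\sum_T|\CB(e_T,\Vw)|$, and for each $T$ introduce a cut-off $\eta_T$ acting on the \emph{test function} $\Vw$ (not on $e_T$). After splitting $\Vw=\Vz+\nabla\theta$ via Lemma~\ref{lem:localregulardecomp}, the ``far'' piece $(\id-\pi_H^E)(\eta_T\Vz+\nabla(\eta_T\theta))$ lies in $\VW$ with support outside $\UN^m(T)$, so both $\CB(\Gf(\mathbf{F}_T),\cdot)$ and $\CB(\Gf_{T,m}(\mathbf{F}_T),\cdot)$ vanish on it by Galerkin orthogonality. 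What remains is bounded by $\|e_T\|_{\VH(\curl)}\,\|\Vw\|_{\VH(\curl,\UN^{m+7}(T))}$, and now Cauchy--Schwarz together with the genuine finite overlap of the patches $\{\UN^{m+7}(T)\}_{T}$ (multiplicity $C_{\ol,m}$) produces $\sqrt{C_{\ol,m}}\,\|\Vw\|_{\VH(\curl)}\bigl(\sum_T\|e_T\|^2\bigr)^{1/2}$, from which \eqref{eq:errorcorrector} follows after dividing by $\|\Vw\|_{\VH(\curl)}$ and inserting the local estimate.
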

As a direct conclusion from Theorem \ref{thm:errorcorrectors} we obtain the main result of this paper that we present in the next subsection.

\subsection{The quasi-local corrector and homogenization} 
\label{subsec:LODlocal} 
Following the above motivation we split the ideal corrector $\Kf(\Vv_H) =- (\Gf \circ \mathcal{L})\Vv_H$ into a sum of quasi-local contributions of the form $\sum_{T \in \CT_H} (\Gf \circ \mathcal{L}_T)\Vv_H$. Applying Theorem \ref{thm:errorcorrectors}, we obtain the following result.
\begin{conclusion}
\label{conclusion-main-result}
Let $\Kf_m := -  \sum_{T \in \CT_H} (\Gf_{T,m} \circ \mathcal{L}_T): \mathring{\CN}(\CT_H) \rightarrow \VW$ denote the localized corrector operator obtained by truncation of $m$th order. Then it holds 
\begin{align}
\label{conclusion-main-result-est}
\inf_{\mathbf{v}_H \in \mathring{\CN}(\CT_H)} \| \Vu - (\id + \Kf_m)\mathbf{v}_H \|_{\VH(\curl)} \le 
C \left( H  + \sqrt{C_{\ol,m}} \beta^m \right) \| \Vf \|_{\VH(\Div)}.
\end{align}
\end{conclusion}
Note that even though the ideal corrector $\Kf$ is a non-local operator, we can approximate it by a quasi-local corrector $\Kf_m$. Here, the quasi-locality is seen by the fact that, if $\Kf$ is applied to a function $\Vv_H$ with local support, the image $\Kf(\Vv_H)$ will typically still have a global support in $\Omega$. On the other hand, if $\Kf_m$ is applied to a locally supported function, the support will only increase by a layer with thickness of order $mH$.
\begin{proof}[Proof of Conclusion \ref{conclusion-main-result}]
With $\Kf_m = -  \sum_{T \in \CT_H} (\Gf_{T,m} \circ \mathcal{L}_T)$ we apply
Conclusion~\ref{conclusion-ideal-corr-est}
and Theorem \ref{thm:errorcorrectors} to obtain
\begin{equation*}
\begin{aligned}
&
\inf_{\mathbf{v}_H \in \mathring{\CN}(\CT_H)} \| \Vu - (\id + \Kf_m)\mathbf{v}_H \|_{\VH(\curl)}
\le
\| \Vu - (\id + \Kf)\mathbf{u}_H \|_{\VH(\curl)} + \|(\Kf - \Kf_m)\mathbf{u}_H \|_{\VH(\curl)}\\
&
\qquad\qquad\qquad\qquad
\le C H \| \Vf \|_{\VH(\Div)} + C \sqrt{C_{\ol,m}}\, \beta^m \left( \sum_{T\in \CT_H} \| \mathcal{L}_T(\mathbf{u}_H) \|_{\VH_0(\curl)^{\prime}}^2 \right)^{1/2},
\end{aligned}
\end{equation*}
where we observe with $\| \mathcal{L}_T(\mathbf{v}_H) \|_{\VH_0(\curl)^{\prime}} \le C \| \mathbf{v}_H \|_{\VH(\curl,T)}$ that
\begin{align*}
\sum_{T\in \CT_H} \| \mathcal{L}_T(\mathbf{u}_H) \|_{\VH_0(\curl)^{\prime}}^2 \le C \| \mathbf{u}_H \|_{\VH(\curl)}^2
= C \| \pi_H^E(\Vu) \|_{\VH(\curl)}^2 \le C \| \Vu \|_{\VH(\curl)}^2 \le C \| \Vf \|_{\VH(\Div)}^2.
\end{align*}
\end{proof}
Conclusion \ref{conclusion-main-result} has immediate implications from the computational point of view. First, we observe that $\Kf_m$ can be computed by solving local decoupled problems. Considering a basis $\{ \boldsymbol{\Phi}_k | \hspace{3pt} 1 \le k \le N \}$ of $\mathring{\CN}(\CT_H)$, we require to determine $\Kf_m(\boldsymbol{\Phi}_k)$. For that, we consider all $T \in \CT_H$ with $T \subset \supp(\boldsymbol{\Phi}_k)$ and solve for $\Kf_{T,m}(\boldsymbol{\Phi}_k) \in \VW(\hspace{1pt}\UN^m(T)\hspace{1pt})$ with
\begin{align}
\label{loc-corrector-problems}
\CB_{\UN^m(T)}( \Kf_{T,m}(\boldsymbol{\Phi}_k), \Vw ) = - \CB_{T}( \boldsymbol{\Phi}_k , \Vw ) \qquad \mbox{for all } 
\Vw \in \VW(\hspace{1pt}\UN^m(T)\hspace{1pt}).
\end{align}
The global corrector approximation is now given by
\[
\Kf_m(\boldsymbol{\Phi}_k) = \underset{ T \subset \supp(\boldsymbol{\Phi}_k) }{\sum_{ T \in \CT_H }}
 \Kf_{T,m}(\boldsymbol{\Phi}_k).
\]
Next, we observe that selecting the localization parameter $m$ such that 
\[ 
    m\gtrsim \lvert \log H\rvert \big/ \lvert \log \beta\rvert,
\]
we have with Conclusion \ref{conclusion-main-result} that
\begin{align}
\label{curl-est-m-logH}\inf_{\mathbf{v}_H \in \mathring{\CN}(\CT_H)} \| \Vu - (\id + \Kf_m)\mathbf{v}_H \|_{\VH(\curl)} \le 
C H \| \Vf \|_{\VH(\Div)},
\end{align}
which is of the same order as for the ideal corrector $\mathcal{K}$. Consequently, we can consider the Galerkin finite element method, where we seek $\Vu_{H,m} \in \mathring{\CN}(\CT_H)$ such that
\begin{align*}
\CB( (\id + \Kf_m)\Vu_{H,m} , (\id + \Kf_m)\mathbf{v}_H ) = (\mathbf{f} , (\id + \Kf_m)\mathbf{v}_H )_{L^2(\Omega)} 
\qquad \mbox{for all } \Vv_{H,m} \in \mathring{\CN}(\CT_H).
\end{align*}
Since a Galerkin method yields the $\VH(\curl)$-quasi-best approximation of $\Vu$ in the space \linebreak[4]$(\id + \Kf_m)\mathring{\CN}(\CT_H)$ we have with \eqref{curl-est-m-logH} that
\begin{align*}
\| \Vu - (\id + \Kf_m)\Vu_{H,m} \|_{\VH(\curl)} \le C H \| \Vf \|_{\VH(\Div)}
\end{align*}
and we have with \eqref{green-est-Hcurl-1}, \eqref{eq:errorcorrector-2} and the $\VH(\curl)$-stability of $\pi_H^E$ that
\begin{align*}
\| \Vu - \Vu_{H,m} \|_{\VH(\Div)^{\prime}} \le C H \| \Vf \|_{\VH(\Div)}.
\end{align*}
This result is a homogenization result in the sense that it yields a coarse function $\Vu_{H,m}$ that approximates the exact solution in $\VH(\Div)^{\prime}$. Furthermore, it yields an appropriate (quasi-local) corrector $\Kf_m(\Vu_{H,m})$ that is required for an accurate approximation in $\VH(\curl)$.
\begin{remark}[Refined estimates]
With a more careful proof, the constants in the estimate of Conclusion \ref{conclusion-main-result} can be specified as  
\begin{eqnarray*}
\label{eq:errorLOD-refined}
\lefteqn{\inf_{\mathbf{v}_H \in \mathring{\CN}(\CT_H)} \| \Vu - (\id + \Kf_m)\mathbf{v}_H \|_{\VH(\curl)}}\\ 
\nonumber&\leq& \alpha^{-1}(1+H)\bigl(H\max\{C_z, C_\theta\} \sqrt{C_{\ol,3}}+C_d C_\pi C_B^2\sqrt{C_{\ol,m}C_{\ol}}\, \beta^m\bigr)\|\Vf\|_{\VH(\Div)},
\end{eqnarray*}
where $\alpha$ and $C_B$ are as in Assumption \ref{asspt:sesquiform}, $C_{d}$ is the constant appearing in the decay estimate \eqref{eq:errorcorrector}, $C_\pi$ is as in Proposition \ref{p:proj-pi-H-E}, $C_z$ and $C_\theta$ are from \eqref{eq:regulardecomp} and $C_{\ol,m}$ as detailed at the beginning of Section \ref{sec:intpol}.
Note that if $m$ is large enough so that $\UN^m(T)=\Omega$ for all $T \in \CT_H$, we have as a refinement of Conclusion \ref{conclusion-ideal-corr-est} that
\begin{eqnarray*}
\inf_{\mathbf{v}_H \in \mathring{\CN}(\CT_H)} \| \Vu - (\id + \Kf)\mathbf{v}_H \|_{\VH(\curl)} \leq \alpha^{-1}(1+H)\bigl(H\max\{C_z, C_\theta\} \sqrt{C_{\ol,3}} \bigr)\|\Vf\|_{\VH(\Div)}.
\end{eqnarray*}
\end{remark}
 
\subsection{A fully discrete localized multiscale method}
\label{subsec:discreteLOD}
The procedure described in the previous section is still not yet \quotes{ready to use} for a practical computation as the local corrector problems \eqref{loc-corrector-problems} involve the infinite dimensional spaces $\VW(\Omega_T)$. Hence, we require an additional fine scale discretization of the corrector problems (just like the cell problems in periodic homogenization theory can typically not be solved analytically). 

For a fully discrete formulation, we introduce a second shape-regular partition $\CT_h$ of $\Omega$ into tetrahedra.
This partition may be non-uniform and is assumed to be obtained from $\CT_H$ by at least one global refinement.
It is a fine discretization in the sense that $h<H$ and that $\CT_h$ resolves all fine-scale features of the coefficients.
Let $\mathring{\CN}(\CT_h)\subset\VH_0(\curl)$ denote the space of N{\'e}d{\'e}lec elements with respect to the partition $\CT_h$.
We then introduce the space 
\[\VW_h(\Omega_T):=\VW(\Omega_T)\cap\mathring{\CN}(\CT_h)=\{\Vv_h\in\mathring{\CN}(\CT_h)|\Vv_h=0\text{ outside }\Omega_T, \pi_H^E(\Vv_h)=0\}\]
and discretize the corrector problem 
\eqref{loc-corrector-problems} with this new space.
The corresponding correctors are denoted by $\Kf_{T,m,h}$ and $\Kf_{m,h}$. With this modification we can prove analogously to the error estimate \eqref{conclusion-main-result-est} that it holds
\begin{align}
\label{conclusion-main-result-est-h}
\inf_{\mathbf{v}_H \in \mathring{\CN}(\CT_H)} \| \Vu_h - (\id + \Kf_{m,h})\mathbf{v}_H \|_{\VH(\curl)} \le 
C \left( H  + \sqrt{C_{\ol,m}} \tilde{\beta}^m \right) \| \Vf \|_{\VH(\Div)},
\end{align}
where $\Vu_h$ is the Galerkin approximation of $\Vu$ in the discrete fine space $\mathring{\CN}(\CT_h)$. If $\CT_h$ is fine enough, we can assume that $\Vu_h$ is a good $\VH(\curl)$-approximation to the true solution $\Vu$. Consequently, it is justified to formulate a fully discrete (localized) multiscale method by seeking
$\Vu_{H,h,m}^{\ms}:=(\id+\Kf_{m,h})\Vu_H$ with $\Vu_H\in \mathring{\CN}(\CT_H)$ such that
\begin{equation}
\label{eq:discreteLOD}
\CB(\Vu_{H,h,m}^{\ms}, (\id+\Kf_{m,h})\Vv_H)=(\Vf, (\id+\Kf_{m,h})\Vv_H)_{L^2(\Omega)}\qquad\mbox{for all } \Vv_H\in\mathring{\CN}(\CT_H).
\end{equation}
As before, we can conclude from \eqref{conclusion-main-result-est-h} together with the choice $m\gtrsim \lvert \log H\rvert/\lvert\log \beta\rvert$, that it holds
\begin{align*}
\| \Vu_h - \Vu_{H,h,m}^{\ms} \|_{\VH(\curl)}
+
\| \Vu_h - \pi_H^E \Vu_{H,h,m}^{\ms} \|_{\VH(\Div)^{\prime}} 
\le C H \| \Vf \|_{\VH(\Div)}.
\end{align*}
Thus, the additional fine-scale discretization does not affect the overall error estimates and we therefore concentrate in the proofs (for simplicity) on the semi-discrete case as detailed in Sections \ref{subsec:idealapprox} and \ref{subsec:LODlocal}. Compared to the fully-discrete case, only some small modifications are needed in the proofs for the decay of the correctors. These modifications are outlined at the end of Section \ref{sec:decaycorrectors}. Note that $\Vu_h$ is not needed in the practical implementation of the method.

\section{Proof of the decay for the Corrector Green's Operator}
\label{sec:decaycorrectors}
In this section, we prove Proposition \ref{prop:decaycorrector1} and Theorem \ref{thm:errorcorrectors}. Since the latter one is based on the first result, we start with proving
the exponential decay of the Green's function associated with $\Gf$. Recall that we quantified the decay indirectly through estimates of the form
\begin{equation*}
\| \Gf(\mathbf{F}_T) \|_{\VH(\curl, \Omega\setminus \UN^m(T))}\lesssim \tilde{\beta}^m\| \mathbf{F}_T \|_{\VH_0(\curl)^{\prime}},
\end{equation*}
where $\mathbf{F}_T$ is a $T$-local functional and $0<\tilde{\beta}<1$.
\begin{proof}[Proof of Proposition \ref{prop:decaycorrector1}]
Let $\eta\in \CS^1(\CT_H)\subset H^1(\Omega)$ be a scalar-valued, piece-wise linear and globally continuous cut-off function with
\begin{equation*}
\eta=0\qquad \text{in}\quad \UN^{m-6}(T)\qquad \qquad\qquad \eta=1\qquad \text{in}\quad \Omega\setminus\UN^{m-5}(T).
\end{equation*}
Denote $\CR=\supp(\nabla \eta)$ and $\Vphi:=\Gf(\mathbf{F}_T) \in \VW$. In the following we use $\UN^k(\CR)=\UN^{m-5+k}(T)\setminus \UN^{m-6-k}(T)$.
Note that $\|\nabla \eta\|_{L^\infty(\CR)}\sim H^{-1}$.
Furthermore, let $\Vphi=\Vphi-\pi_H^E\Vphi=\Vz+\nabla \theta$ be the splitting from Lemma \ref{lem:localregulardecomp}.
We obtain with $\eta\leq 1$, the coercivity, and the product rule
that
\begin{align*}
\alpha\|\Vphi\|^2_{\VH(\curl, \Omega\setminus\UN^m(T))}&\leq \bigl|(\mu\curl\Vphi, \eta\curl\Vphi)_{L^2(\Omega)}+(\kappa\Vphi, \eta\Vphi)_{L^2(\Omega)}\bigr|\\
&=\bigl|(\mu\curl\Vphi, \eta\curl\Vz)_{L^2(\Omega)}+(\kappa\Vphi, \eta\nabla\theta+\eta\Vz)_{L^2(\Omega)}\bigr|\\
&\leq M_1 
+M_2+M_3+M_4+M_5,
\end{align*}
where 
\begin{align*}
& M_1:=\Bigl|\bigl(\mu\curl\Vphi, \curl(\id-\pi_H^E)(\eta\Vz)\bigr)_{L^2(\Omega)}
&&\hspace{-23pt}+\enspace
\bigl(\kappa \Vphi, (\id-\pi_H^E)
(\eta\Vz+\nabla(\eta\theta))\bigr)_{L^2(\Omega)}\Bigr|,
\\
&M_2:=\Bigl|\bigl(\mu\curl\Vphi, \curl\pi_H^ E(\eta\Vz)\bigr)_{L^2(\Omega)}\Bigr|,
&&
M_3:=\Bigl|\bigl(\kappa \Vphi,\pi_H^E(\eta\Vz+\nabla(\eta\Vphi))\bigr)_{L^2(\Omega)}\Bigr|,
\\
&
M_4:=\Bigl|\bigl(\mu\curl \Vphi, \nabla \eta\times \Vz\bigr)_{L^2(\Omega)}\Bigr|,
&&
M_5:=\Bigl|\bigl(\kappa\Vphi, \theta\nabla \eta\bigr)_{L^2(\Omega)}\Bigr|. 
\end{align*}
We used the product rule $\curl(\eta\Vz)=\nabla\eta\times \Vz+\eta\curl\Vz$ here.

We now estimate the five terms separately.
Let $\Vw:=(\id-\pi_H^E)(\eta\Vz+\nabla(\eta\theta))$ and note that  (i) $\curl\Vw=\curl(\id-\pi_H^E)(\eta\Vz)$, (ii) $\Vw\in \VW$, (iii) $\supp\Vw\subset\Omega\setminus T$.
Using the definition of the Corrector Green's Operator in \eqref{cor-greens-op} and the fact that $\mathbf{F}_T(\Vw)=0$ yields $M_1=0$.

For $M_2$, note that the commuting property of the projections
$\pi^E$ and $\pi^F$ implies
$\curl\pi_H^E(\Vz)=\pi_H^F(\curl \Vz)=\pi_H^F(\curl\Vphi)=\curl\pi_H^E\Vphi=0$
because $\Vphi\in \VW$.
Using the stability of $\pi_H^E$ \eqref{eq:stabilitycurl} and Lemma \ref{lem:localregulardecomp}, we can estimate $M_2$ as
\begin{align*}
M_2&\lesssim \|\curl\Vphi\|_{L^2(\UN(\CR))}\|\curl\pi_H^E(\eta\Vz)\|_{L^2(\UN(\CR))}\lesssim \|\curl\Vphi\|_{L^2(\UN(\CR))}\|\curl(\eta\Vz)\|_{L^2(\UN^2(\CR))}\\
&\lesssim \|\curl\Vphi\|_{L^2(\UN(\CR))}\Bigl(\|\nabla\eta\|_{L^\infty(\CR)}\|\Vz\|_{L^2(\CR)}
 +\|\eta\|_{L^\infty(\UN^2(\CR))}\|\curl\Vz\|_{L^2(\UN^{m-3}(T)\setminus \UN^{m-6}(T)))}\Bigr)\\
&\lesssim \|\curl\Vphi\|_{L^2(\UN(\CR))}\|\curl\Vphi\|_{L^2(\UN^{m}(T)\setminus \UN^{m-9}(T))}.
\end{align*}

In a similar manner, we obtain for $M_3$ that
\begin{align*}
M_3&\lesssim\|\Vphi\|_{L^2(\UN(\CR))}\Bigl(\|\eta \Vz\|_{L^2(\UN^2(\CR))}+\|\nabla(\eta\theta)\|_{L^2(\UN^2(\CR))}+H\|\curl(\eta\Vz)\|_{L^2(\UN^2(\CR))}\Bigr)\\
&\lesssim \|\Vphi\|_{L^2(\UN(\CR))}\Bigl(\|\Vphi\|_{L^2(\UN^{m}(T)\setminus \UN^{m-9}(T))}+H\|\curl\Vphi\|_{L^2(\UN^{m}(T)\setminus \UN^{m-9}(T))}\Bigr).
\end{align*}

Simply using Lemma \ref{lem:localregulardecomp}, we deduce for $M_4$ and $M_5$
\begin{align*}
M_4&\lesssim \|\curl\Vphi\|_{L^2(\CR)}\|\curl\Vphi\|_{L^2(\UN^3(\CR))},
\\
M_5&\lesssim \|\Vphi\|_{L^2(\CR)}
    (\|\Vphi\|_{L^2(\UN^3(\CR))}
      + H\|\curl \Vphi\|_{L^2(\UN^3(\CR))}).
\end{align*}
All in all, this gives
\begin{equation*}
\|\Vphi\|^2_{\VH(\curl, \Omega\setminus \UN^m(T))}\leq 
\tilde{C} \|\Vphi\|^2_{\VH(\curl, \UN^{m}(T)\setminus \UN^{m-9}(T) )}
\end{equation*}
for some $\tilde{C}>0$. 
Moreover, it holds that
\begin{equation*}
\|\Vphi\|^2_{\VH(\curl, \Omega\setminus \UN^m(T))}
=
\|\Vphi\|^2_{\VH(\curl, \Omega\setminus \UN^{m-9}(T))}
- \|\Vphi\|^2_{\VH(\curl, \UN^m(T)\setminus \UN^{m-9}(T))}.
\end{equation*}
Thus, we obtain finally with $\tilde{\beta}_{\pre}:=(1+\tilde{C}^{-1})^{-1}<1$, a re-iteration of the above argument, and Lemma~\ref{lemma:corrector-props} that
\begin{equation*}
\|\Vphi\|^2_{\VH(\curl, \Omega\setminus \UN^m(T))}\lesssim \tilde{\beta}_{\pre}^{\lfloor m/9\rfloor}\|\Vphi\|^2_{\VH(\curl)}\lesssim \tilde{\beta}_{\pre}^{\lfloor m/9\rfloor}\| \mathbf{F}_T \|^2_{\VH_0(\curl)^\prime}.
\end{equation*}
Algebraic manipulations give the assertion.
\end{proof}
\begin{proof}[Proof of Theorem \ref{thm:errorcorrectors}]
We start by proving the following local estimate
\begin{align}
\label{eq:errorcorrectorlocal}
\| \Gf( \mathbf{F}_T )-\Gf_{T,m}( \mathbf{F}_T ) \|_{\VH(\curl)}&\leq C_1 \tilde{\beta}^m \| \mathbf{F}_T \|_{\VH_0(\curl)^{\prime}}
\end{align}
for some constant $C_1>0$ and $0<\tilde{\beta}<1$.
Let $\eta\in \CS^1(\CT_H)$ be a piece-wise linear and globally continuous cut-off function with
\begin{align*}
\eta=0 \qquad \text{in} \quad \Omega\setminus \UN^{m-1}(T)\qquad\qquad\qquad\eta=1\qquad\text{in}\quad \UN^{m-2}(T).
\end{align*}
Due to C{\'e}a's Lemma we have
\begin{align*}
\| \Gf( \mathbf{F}_T ) - \Gf_{T,m}( \mathbf{F}_T ) \|_{\VH(\curl)}\lesssim \inf_{\Vw_{T,m}\in \VW(\Omega_T)}\| \Gf( \mathbf{F}_T ) -\Vw_{T,m}\|_{\VH(\curl)}.
\end{align*}
We use the splitting of Lemma \ref{lem:localregulardecomp} and write $\Gf( \mathbf{F}_T )=(\id-\pi_H^E)(\Gf( \mathbf{F}_T ))=\Vz+\nabla\theta$.
Then we choose $\Vw_{T,m}=(\id-\pi_H^E)(\eta\Vz+\nabla(\eta\theta))\in \VW(\Omega_T)$ and derive
with the stability of $\pi_H^E$ and \eqref{eq:regulardecomp}
\begin{align*}
\|\Gf( \mathbf{F}_T )-\Gf_{T,m}( \mathbf{F}_T )\|_{\VH(\curl)}&\lesssim \|(\id-\pi_H^E)(\Gf( \mathbf{F}_T )-\eta\Vz - \nabla(\eta\theta))\|_{\VH(\curl)}\\
&=\|(\id-\pi_H^E)((1-\eta)\Vz+\nabla((1-\eta)\theta))\|_{\VH(\curl)}\\
&\lesssim \|(1-\eta)\Vz\|_{L^2(\Omega\setminus\{\eta=1\})}+\|\nabla((1-\eta)\theta)\|_{L^2(\Omega\setminus\{\eta=1\})}\\*
&\quad+(1+H)\|\curl((1-\eta)\Vz)\|_{L^2(\Omega\setminus\{\eta=1\})}\\
&\lesssim (1+H)\,\| \Gf( \mathbf{F}_T )\|_{\VH(\curl, \UN^3(\Omega\setminus\{\eta=1\}))}.
\end{align*}
Combination with Proposition \ref{prop:decaycorrector1} gives estimate \eqref{eq:errorcorrectorlocal}.

To prove the main estimate of Theorem \ref{thm:errorcorrectors}, i.e.\ estimate \eqref{eq:errorcorrector}, we define, for a given simplex $T\in\CT_H$, the piece-wise linear, globally continuous cut-off function $\eta_T\in \CS^1(\CT_H)$ via
\begin{align*}
\eta_T=0\qquad \text{in}\quad   \UN^{m+1}(T)\qquad\qquad\qquad \eta_T=1\qquad\text{in}\quad \Omega\setminus\UN^{m+2}(T).
\end{align*}
Denote $\Vw:=(\Gf-\Gf_m)(\mathbf{F})=\sum_{T \in \CT_H} \Vw_T$ with $\Vw_T:=(\Gf-\Gf_{T,m})(\mathbf{F}_T)$ and split $\Vw$ according to Lemma \ref{lem:localregulardecomp} as $\Vw=\Vw-\pi_H^E(\Vw)=\Vz+\nabla\theta$.
Due to the ellipticity of $\CB$ and its sesquilinearity, we have
\begin{align*}
\alpha\|\Vw\|^2_{\VH(\curl)}
\leq
 \Bigl|\sum_{T\in\CT_H}\CB(\Vw_T,\Vw)\Bigr|\leq \sum_{T\in \CT_H}|\CB(\Vw_T,\Vz+\nabla\theta )|
\leq
\sum_{T\in \CT_H} (A_T + B_T)
\end{align*}
where, for any $T\in\CT_H$, we abbreviate
\begin{equation*}
 A_T:=|\CB(\Vw_T,(1-\eta_T)\Vz+\nabla((1-\eta_T)\theta))|
 \quad\text{and}\quad
 B_T:=|\CB(\Vw_T,\eta_T\Vz+\nabla(\eta_T\theta))| .
\end{equation*}

For the term $A_T$, we derive by using the properties of the cut-off function and the regular decomposition \eqref{eq:regulardecomp}
\begin{align*}
A_T&\lesssim\|\Vw_T\|_{\VH(\curl)}\|(1-\eta_T)\Vz+\nabla((1-\eta_T)\theta)\|_{\VH(\curl, \{\eta_T\neq 1\})}\\
&\leq \|\Vw_T\|_{\VH(\curl)}\,(1+H)\,\|\Vw\|_{\VH(\curl, \UN^3(\{\eta_T\neq 1\}))}.
\end{align*}
The term $B_T$ can be split as
\begin{align*}
B_T\leq |\CB(\Vw_T,(\id-\pi_H^E)(\eta_T\Vz+\nabla(\eta_T\theta)))|+|\CB(\Vw_T,\pi_H^E(\eta_T\Vz+\nabla(\eta_T\theta)))|.
\end{align*}
Denoting $\Vphi:=(\id-\pi_H^E)(\eta_T\Vz+\nabla(\eta_T\theta))$, we observe $\Vphi\in \VW$ and $\supp\Vphi\subset\Omega\setminus \UN^m(T)$.
Because $\Vphi\in\VW$ with support outside $T$, we have $\CB(\Gf(\mathbf{F}_T),\Vphi)=\mathbf{F}_T(\Vphi)=0$.
Since $\Vphi$ has support outside $\UN^m(T)=\Omega_T$, but $\Gf_{T,m}(\mathbf{F}_T)\in \VW(\Omega_T)$, we also have $\CB(\Gf_{T,m}(\mathbf{F}_T),\Vphi)=0$.
All in all, this means $\CB(\Vw_T , \Vphi )=0$.
Using the stability of $\pi_H^E$ \eqref{eq:stabilityL2}, \eqref{eq:stabilitycurl} and the regular decomposition \eqref{eq:regulardecomp}, we obtain
\begin{align*}
B_T &\leq |\CB(\Vw_T,\pi_H^E(\eta_T\Vz+\nabla(\eta_T\theta)))|\\*
&\lesssim\|\Vw_T\|_{\VH(\curl)}\bigl(\|\eta_T\Vz+\nabla(\eta_T\theta)\|_{L^2(\UN^2(\{\eta_T\neq 1\}))}+(1+H)\|\curl(\eta_T\Vz)\|_{L^2(\UN^2(\{\eta_T\neq 1\}))}\bigr)\\
&\lesssim \|\Vw_T\|_{\VH(\curl)}(1+H)\,\|\Vw\|_{\VH(\curl, \UN^5(\{\eta_T\neq 1\}))}.
\end{align*}
Combining the estimates for $A_T$ and $B_T$
and observing that $\{\eta_T\neq 1\} =\UN^{m+2}(T)$, we deduce
\begin{align*}
\alpha\|\Vw\|_{\VH(\curl)}^2&\lesssim \sum_{T\in\CT_H}\|\Vw_T\|_{\VH(\curl)}\,\|\Vw\|_{\VH(\curl, \UN^{m+7}(T))}
\lesssim \sqrt{C_{\ol, m}}\, \|\Vw\|_{\VH(\curl)}\sqrt{\sum_{T\in\CT_H}\|\Vw_T\|_{\VH(\curl)}^2}.
\end{align*}
Combination with estimate \eqref{eq:errorcorrectorlocal} finishes the proof of \eqref{eq:errorcorrector}. Finally, estimate \eqref{eq:errorcorrector-2} follows with
\begin{align*}
\|\Vw\|_{\VH(\Div)^{\prime}} \leq C H \|\Vw\|_{\VH(\curl)}.
\end{align*}
\end{proof}

\smallskip
\textbf{Changes for the fully discrete localized method.}\hspace{2pt}
Let us briefly consider the fully-discrete setting described in Section \ref{subsec:discreteLOD}. Here we note that, up to a modification of the constants, Theorem \ref{thm:errorcorrectors} also holds for the difference 
$(\Gf_h - \Gf_{h,m})(\mathbf{F})$, where $\Gf_h(\mathbf{F})$ is the Galerkin approximation of $\Gf(\mathbf{F})$ in the discrete space $\VW_h:=\{\Vv_h\in\mathring{\CN}(\CT_h)|\pi_H^E(\Vv_h)=0\}$ and where $\Gf_{h,m}(\mathbf{F})$ is defined analogously to $\Gf_{h,m}(\mathbf{F})$ but where $\VW_h(\Omega_T):=\{ \Vw_h \in \VW_h| \hspace{3pt} \Vw_h \equiv 0 \mbox{ in } \Omega \setminus \Omega_T \}$ replaces $\VW(\Omega_T)$ in the local problems.
Again, the central observation is a decay result similar to Proposition \ref{prop:decaycorrector1}, but now for $\Gf_{h}(\mathbf{F}_T)$.
A few modifications to the proof have to be made, though: The product of the cut-off function $\eta$ and the regular decomposition $\Vz+\nabla\theta$ does not lie in $\mathring{\CN}(\CT_h)$.
Therefore, an additional interpolation operator into $\mathring{\CN}(\CT_H)$ has to be applied.
Here it is tempting to just use the nodal interpolation operator and its stability on piece-wise polynomials, since $\eta \hspace{2pt} \Gf_{h}(\mathbf{F}_T)$ is a piece-wise (quadratic) polynomial. However, the regular decomposition employed is no longer piece-wise polynomial and we hence have to use the Falk-Winther operator $\pi_h^E$ onto the fine space $\mathring{\CN}(\CT_h)$ here.
This means that we have the following modified terms in the proof of Proposition \ref{prop:decaycorrector1}:
\begin{align*}
\tilde{M}_1&:=\Bigl|\bigl(\mu\curl\Vphi, \curl(\id-\pi_H^E)\pi_h^E(\eta\Vz)\bigr)_{L^2(\Omega)}
&&\hspace{-17pt}+\enspace
\bigl(\kappa \Vphi, (\id-\pi_H^E)\pi_h^E(\eta\Vz+\nabla(\eta\theta))\bigr)_{L^2(\Omega)}\Bigr|,
\\
\tilde{M}_2&:=\Bigl|\bigl(\mu\curl\Vphi, \curl\pi_H^ E\pi_h^E(\eta\Vz)\bigr)_{L^2(\Omega)}\Bigr|,
&&
\tilde{M}_3:=\Bigl|\bigl(\kappa \Vphi,\pi_H^E\pi_h^E(\eta\Vz+\nabla(\eta\Vz))\bigr)_{L^2(\Omega)}\Bigr|.
\end{align*}
They can be treated similarly to  
$M_1$, $M_2$ and $M_3$, using in addition the stability of $\pi_h^E$.
Note that the additional interpolation operator $\pi_h^E$ will enlarge the patches slightly, so that we should define $\eta$ via
\begin{align*}
\eta=0\qquad \text{in}\quad \UN^{m-8}(T)\qquad\qquad\qquad\eta=1\qquad\text{in}\quad \Omega\setminus \UN^{m-7}(T).
\end{align*}
The terms $M_4$ and $M_5$ remain unchanged, and we moreover get the terms
\begin{align*}
\tilde{M}_6:=\Bigl|\bigl(\mu\curl\Vphi, \curl(\id-\pi_h^E)(\eta\Vz)\bigr)_{L^2(\Omega)}\Bigr|,
\qquad
\tilde{M}_7:=\Bigl|\bigl(\kappa \Vphi, (\id-\pi_h^E)(\eta\Vz+\nabla(\eta\theta))\bigr)_{L^2(\Omega)}\Bigr|.
\end{align*}
These can be estimated simply using the stability of $\pi_h^E$, the properties of $\eta$ and the regular decomposition \eqref{eq:regulardecomp}.

\section{Falk--Winther interpolation}
\label{sec:intpolimpl}

This section briefly describes the construction of the 
bounded local cochain projection of \cite{FalkWinther2014} for the 
present case of $\VH(\curl)$-problems in three space dimensions.
The two-dimensional case is thoroughly described in the gentle
introductory paper \cite{FalkWinther2015}.
After giving the definition of the operator, we describe how it
can be represented as a matrix. This is important because the 
interpolation operator is part of the algorithm and not a mere
theoretical tool and therefore
required in a practical realization.

\subsection{Definition of the operator}
Let $\Delta_0$ denote the set of vertices of $\CT_H$ and 
let $\mathring{\Delta}_0:=\Delta_0\cap\Omega$ denote the 
interior vertices.
Let $\Delta_1$ denote the set of edges and 
let $\mathring{\Delta}_1$ denote the interior edges, i.e., 
the elements of $\Delta_1$ that
are not a subset of $\partial\Omega$.
The space $\mathring{\CN}(\CT_H)$ is spanned by the well-known edge-oriented
basis $(\Vpsi_E)_{E\in\mathring{\Delta}_1}$ defined for any $E\in\mathring{\Delta}_1$
through the property
\begin{equation*}
\fint_E \Vpsi_E\cdot \Vt_E\,ds = 1
\quad\text{and}\quad
\fint_{E'} \Vpsi_E\cdot \Vt_E\,ds = 0
\quad\text{for all }E'\in\mathring{\Delta}_1\setminus\{E\}.
\end{equation*}
Here $\Vt_E$ denotes the unit tangent to the edge $E$ with a globally
fixed sign.
Any vertex $z\in\Delta_0$ possesses a nodal patch (sometimes also called
macroelement)
\begin{equation*}
\omega_z:=\Int\Big(\bigcup\{T\in\CT_H : z\in T\}\Big).
\end{equation*}
For any edge $E\in\Delta_1$ shared by two vertices
$z_1,z_2\in\Delta_0$ such that
$E=\operatorname{conv}\{z_1,z_2\}$, the extended edge patch
reads
\begin{equation*}
\omega_E^{\mathit{ext}} := \omega_{z_1}\cup\omega_{z_2}. 
\end{equation*}
The restriction of the mesh $\CT_H$ to $\omega_E^{\mathit{ext}}$
is denoted by
$\CT_H(\omega_E^{\mathit{ext}})$.
Let $\CS^1(\CT_H(\omega_E^{\mathit{ext}}))$ denote the (scalar-valued) first-order
Lagrange finite element space with respect to 
$\CT_H(\omega_E^{\mathit{ext}})$ and let 
$\CN(\CT_H(\omega_E^{\mathit{ext}}))$ denote the 
lowest-order N\'ed\'elec finite element space over 
$\CT_H(\omega_E^{\mathit{ext}})$.
The operator 
\[ 
  Q^1_E:
   \VH(\curl, \omega_E^{\mathit{ext}})
   \to
   \CN(\CT_H(\omega_E^{\mathit{ext}}))
\]
is defined for any $\Vu\in \VH(\curl, \omega_E^{\mathit{ext}})$
via
\begin{equation*}
\begin{aligned}
 (\Vu-Q^1_E \Vu, \nabla \tau) &= 0 \quad 
    &&\text{for all } \tau\in \CS^1(\CT_H(\omega_E^{\mathit{ext}}))
 \\
 (\curl (\Vu-Q^1_E \Vu),\curl \Vv) &=0
    &&\text{for all } \Vv\in \CN(\CT_H(\omega_E^{\mathit{ext}})).
\end{aligned}
\end{equation*}

Given any vertex $y\in\Delta_0$, define the piecewise constant function
$z^0_y$ by
\begin{equation*}
z^0_y = \begin{cases}
     (\operatorname{meas}(\omega_y))^{-1} &\text{in } \omega_y \\
         0                      &\text{in } \Omega\setminus\omega_y
        \end{cases}
\end{equation*}
Given any edge $E\in\Delta_1$ shared by vertices
$y_1,y_2\in\Delta_0$ such that $E=\operatorname{conv}\{y_1,y_2\}$,
define
\begin{equation*}
(\delta z^0)_E := 
  z^0_{y_2} - z^0_{y_1} .
\end{equation*}
Let $E\in\Delta_1$ and denote by 
$\mathring{\mathcal{RT}}(\CT_H(\omega_E^{\mathit{ext}}))$ the lowest-order
Raviart--Thomas space with respect to 
$\CT_H(\omega_E^{\mathit{ext}})$ with vanishing normal trace on
the boundary $\partial (\omega_E^{\mathit{ext}})$.
Let for any $E\in\Delta_1$
the field
$\Vz_E^1\in\mathring{\mathcal{RT}}(\CT_H(\omega_E^{\mathit{ext}}))$
be defined by
\begin{equation*}
\begin{aligned}
  \Div \Vz_E^1 &=-(\delta z^0)_E \quad &&
  \\
  (\Vz_E^1,\curl\Vtau) &= 0 
     &&\text{for all } 
      \Vtau\in\mathring{\CN}(\CT_H(\omega_E^{\mathit{ext}}))
\end{aligned}
\end{equation*}
where $\mathring{\CN}(\CT_H(\omega_E^{\mathit{ext}}))$ denotes
the N\'ed\'elec finite element functions over 
$\CT_H(\omega_E^{\mathit{ext}})$ with vanishing tangential trace 
on the boundary $\partial(\omega_E^{\mathit{ext}})$.
The operator 
$M^1:L^2(\Omega;\cz^3)\to\mathring{\CN}(\CT_H)$ maps any
$\Vu\in L^2(\Omega;\cz^3)$ to
\begin{equation*}
M^1\Vu :=
\sum_{E\in\mathring{\Delta}_1}
     (\operatorname{length}(E))^{-1}
      \int_{\omega_E^{\mathit{ext}}} \Vu\cdot \Vz_E^1\,dx\, \Vpsi_E.
\end{equation*}

The operator 
\[
 Q^1_{y,-} : \VH(\curl,\omega_E^{\mathit{ext}})
 \to 
 \CS^1(\CT_H(\omega_E^{\mathit{ext}}))
\] 
is the solution operator of a local discrete Neumann problem.
For any $\Vu\in \VH(\curl, \omega_E^{\mathit{ext}})$,
the function
$ Q^1_{y,-} \Vu $ solves
\begin{equation*}
\begin{aligned}
(\Vu-\nabla Q^1_{y,-} \Vu,\nabla v) &= 0
   \quad&&\text{for all } v\in \CS^1(\CT_H(\omega_E^{\mathit{ext}}))
\\
\int_{\omega_E^{\mathit{ext}}} Q^1_{y,-} \Vu\,dx & = 0. &&
\end{aligned}
\end{equation*}
Define now the operator 
$S^1:\VH_0(\curl,\Omega)\to \mathring{\CN}(\CT_H)$
via
\begin{equation}\label{e:S1def1}
S^1 \Vu :=
M^1 \Vu +
\sum_{y\in\mathring{\Delta}_0} 
  (Q^1_{y,-}\Vu)(y)\nabla \lambda_y .
\end{equation}
The second sum on the right-hand side can be rewritten in terms
of the basis functions $\Vpsi_E$.
The inclusion
$\nabla \mathring{\CS}^1(\CT_H)\subseteq \mathring{\CN}(\CT_H)$
follows from the principles of finite element exterior calculus
\cite{ArnoldFalkWinther2006,ArnoldFalkWinther2010}.
Given an interior vertex 
$y\in\mathring{\Delta}_0$, the expansion in terms of the basis
$(\Vpsi_E)_{E\in\mathring{\Delta}_1}$ reads
\begin{equation*}
\nabla\lambda_z
=
\sum_{E\in\mathring{\Delta}_1} \fint_E \nabla\lambda_z\cdot \Vt_E\,ds\,\Vpsi_E
=
\sum_{E\in\Delta_1(z)} 
  \frac{\operatorname{sign}(\Vt_E\cdot\nabla\lambda_z)}{\operatorname{length}(E)}
   \Vpsi_E
\end{equation*}
where $\Delta_1(z)\subseteq\mathring{\Delta}_1$
is the set of all edges that contain $z$.
Thus, $S^1$ from \eqref{e:S1def1} can be rewritten as
\begin{equation}\label{e:S1def2}
S^1 \Vu :=
M^1 \Vu +
\sum_{E\in\mathring{\Delta}_1}
   (\operatorname{length}(E))^{-1}
\big((Q^1_{y_2(E),-}\Vu)(y_2(E)) - (Q^1_{y_1(E),-}\Vu)(y_1(E))\big) 
\Vpsi_E
\end{equation}
where $y_1(E)$ and $y_2(E)$ denote the endpoints of $E$
(with the orientation convention 
$\Vt_E = (y_2(E)-y_1(E))/\operatorname{length}(E)$).
Finally, the Falk-Winter interpolation operator $\pi_H^E:\VH_0(\curl, \Omega)\to\mathring{\CN}(\CT_H)$ is defined as
\begin{equation}\label{e:R1def}
\pi_H^E \Vu
:=
S^1 \Vu
+
\sum_{E\in\mathring{\Delta}_1}
   \fint_E
    \big((\id-S^1)Q^1_E \Vu\big)\cdot \Vt_E\,ds
   \,\Vpsi_E .
\end{equation}

\subsection{Algorithmic aspects}

Given a mesh $\CT_H$ and a refinement $\CT_h$, the linear 
projection
$\pi_H : \mathring{\CN}(\CT_h)\to \mathring{\CN}(\CT_H)$ can be represented by a matrix
$\mathsf{P}\in\mathbb R^{\dim \mathring{\CN}(\CT_H)\times\dim \mathring{\CN}(\CT_h)}$.
This subsection briefly sketches the assembling of that matrix.
The procedure involves the solution of local discrete problems
on the macroelements. It is important to note that these problems
are of small size because the mesh $\CT_h$ is a refinement
of $\CT_H$. 

Given an interior edge $E\in\mathring{\Delta}_1^H$
of $\CT_H$ and an interior edge 
$e\in\mathring{\Delta}_1^h$ of $\CT_h$, the interpolation $\pi_H \Vpsi_e$
has an expansion
\begin{equation*}
 \pi_H \Vpsi_e= \sum_{E'\in\mathring{\Delta}_1^H} c_{E'} \Vpsi_{E'}
\end{equation*}
for real coefficients $(c_{E'})_{E'\in\mathring{\Delta}_1^H}$.
The coefficient $c_E$ is zero whenever $e$ is not contained in the 
closure of the extended edge patch $\overline{\omega}_E^{\mathit{ext}}$.
The assembling can therefore be organized in a loop over all interior
edges in $\mathring{\Delta}_1^H$.
Given a global numbering of the edges in 
$\mathring{\Delta}_1^H$, 
each edge $E\in\mathring{\Delta}_1^H$
is equipped with a unique index 
$I_H(E)\in\{1,\dots,\operatorname{card}(\mathring{\Delta}_1^H)\}$.
Similarly, the numbering of edges in $\mathring{\Delta}_1^h$ is denoted
by $I_h$.

The matrix $\mathsf{P}=\mathsf{P_1}+\mathsf{P_2}$ will be composed 
as the sum of matrices
$\mathsf{P_1}$, $\mathsf{P_2}$ that represent the two summands on the 
right-hand side of \eqref{e:R1def}.
Those will be assembled in loops over the interior edges.
Matrices $\mathsf{P_1}$, $\mathsf{P_2}$ are initialized as
empty sparse matrices.

\subsubsection{Operator $\mathsf{P_1}$}

\noindent
\textbf{for} $E\in\mathring{\Delta}_1^H$ \textbf{do}

Let the interior edges in 
$\mathring{\Delta}_1^h$ that lie inside
$\overline{\omega}_E^{\mathit{ext}}$
be denoted with
$\{e_1,e_2,\dots,e_N\}$ for some $N\in\mathbb N$.
The entries $\mathsf{P}_1(I_H(E),[I_h(e_1)\dots I_h(e_N)])$ of the matrix
$\mathsf{P}_1$ are now determined as follows.
Compute $\Vz^1_E \in \mathring{\mathcal{RT}}(\CT_H({\omega}_E^{\mathit{ext}}))$.
The matrix
$\mathsf{M}_E\in\mathbb R^{1\times N}$ defined via
\[
  \mathsf{M}_E 
 :=
 (\operatorname{length}(E))^{-1}
 \left[
 \int_{{\omega}_E^{\mathit{ext}}} \Vz^1_E\cdot\Vpsi_{e_j}\,dx
 \right]_{j=1}^N
\]
represents the map of the basis functions on the fine mesh
to the coefficient of $M^1$ contributing to $\Vpsi_E$ on the coarse mesh.
Denote by 
$\mathsf{A}_{y_j(E)}$ and 
$\mathsf{B}_{y_j(E)}$ ($j=1,2$)
the stiffness and right-hand side matrix
representing the system for the operator $Q_{y_j(E),-}$
\begin{align*}
 \mathsf{A}_{y_j(E)}
 &:=
 \left[
 \int_{\omega_{y_j(E)}} \nabla \phi_y \cdot\nabla \phi_z\,dx
 \right]_{y,z\in\Delta_0(\CT_H(\omega_{y_j(E)}))},
\\
 \mathsf{B}_{y_j(E)}
 &:=
 \left[
 \int_{\omega_{y_j(E)}} \nabla \phi_y \cdot\Vpsi_{e_j}\,dx
 \right]_{\substack{y\in\Delta_0(\CT_H(\omega_{y_j(E)}))\\ j=1,\dots,N}}.
\end{align*}
After enhancing the system 
to
$\tilde{\mathsf{A}}_{y_j(E)}$ and 
$\tilde{\mathsf{B}}_{y_j(E)}$ 
(with a Lagrange multiplier accounting for the mean constraint),
it is uniquely solvable.
Set 
$\tilde{\mathsf{Q}}_{y_j(E)} =
  \tilde{\mathsf{A}}_{y_j(E)}^{-1}\tilde{\mathsf{B}}_{y_j(E)}$
and extract the row corresponding to the vertex $y_j(E)$
\[
  \mathsf{Q}_j:=
     (\operatorname{length}(E))^{-1}
  \tilde{\mathsf{Q}}_{y_j(E)}[y_j(E),:]
  \in \mathbb R^{1\times N}.
\]
Set
\[ 
  \mathsf{P}_1(I_H(E),[I_h(e_1)\dots I_h(e_N)])
  =
  \mathsf{M}_E + \mathsf{Q}_1 -\mathsf{Q}_2 .
\]
\noindent
\textbf{end}

\subsubsection{Operator $\mathsf{P_2}$}

\noindent
\textbf{for} $E\in\mathring{\Delta}_1^H$ \textbf{do}

Denote the matrices
-- where indices $j,k$ run from $1$ to
$\operatorname{card}(\Delta_1(\CT_H({\omega}_E^{\mathit{ext}})))$, $y$ through \linebreak[4]$\Delta_0(\CT_H({\omega}_E^{\mathit{ext}}))$, and $\ell=1,\ldots, N$ --
\begin{equation*}
\mathsf{S}_E
 :=
  \left[
 \int_{{\omega}_E^{\mathit{ext}}} 
       \curl \Vpsi_{E_j} \cdot\curl\Vpsi_{E_k}\,dx
 \right]_{j,k}
\mathsf{T}_E
 :=
 \left[
 \int_{{\omega}_E^{\mathit{ext}}} 
       \Vpsi_{E_j} \cdot\nabla\lambda_{y}\,dx
 \right]_{j,y}
\end{equation*}
and
\begin{equation*}
\mathsf{F}_E
 :=
  \left[
 \int_{{\omega}_E^{\mathit{ext}}} 
       \curl \Vpsi_{E_j} \cdot\curl\Vpsi_{e_\ell}\,dx
 \right]_{j,\ell}
\mathsf{G}_E
 :=
 \left[
 \int_{{\omega}_E^{\mathit{ext}}} 
        \Vpsi_{e_\ell} \cdot\nabla\lambda_{y}\,dx
 \right]_{y, \ell} .
\end{equation*}
Solve the saddle-point system
\begin{equation*}
 \begin{bmatrix}
  \mathsf{S} & \mathsf{T}^* \\ \mathsf{T} & 0
 \end{bmatrix}
\begin{bmatrix}
  \mathsf{U} \\ \mathsf{V}
 \end{bmatrix}
=
\begin{bmatrix}
  \mathsf{F} \\ \mathsf{G}
 \end{bmatrix} .
\end{equation*}
(This requires an additional one-dimensional gauge condition
because the sum of the test functions $\sum_y\nabla\lambda_y$ equals
zero.)
Assemble the operator $S^1$ (locally) as described in the 
previous step and denote this matrix by
$\mathsf{P}_1^{\mathit{loc}}$.
Compute $\mathsf{U}- \mathsf{P}_1^{\mathit{loc}} \mathsf{U}$ 
and extract the line 
$\mathsf{X}$ corresponding to the edge $E$

\[ 
  \mathsf{P_2}(I_H(E),[I_h(e_1)\dots I_h(e_N)])
  =
  \mathsf{X} .
\]
\noindent
\textbf{end}

\section*{Conclusion}
In this paper, we suggested a procedure for the numerical homogenization of $\VH(\curl)$-elliptic problems.
The exact solution is decomposed into a coarse part, which is a good approximation in $\VH(\Div)^\prime$, and a corrector contribution by using the Falk-Winther interpolation operator.
We showed that this decomposition gives an optimal order approximation in $\VH(\curl)$, independent of the regularity of the exact solution.
Furthermore, the corrector operator can be localized to patches of macro elements, which allows for an efficient computation.
This results in a generalized finite element method in the spirit of the Localized Orthogonal Decomposition which utilizes
the bounded local cochain projection of the Falk-Winther
as part of the algorithm.

\section*{Acknowledgments}
Main parts of this paper were written while the authors enjoyed the kind hospitality of the Hausdorff Institute for Mathematics in Bonn.
PH and BV acknowledge financial support by the DFG in the project OH 98/6-1 ``Wave propagation in periodic structures and negative refraction mechanisms''.
DG acknowledges support by the DFG through CRC 1173
``Wave phenomena: analysis and numerics'' and by the 
Baden-W\"urttemberg Stiftung
(Eliteprogramm f\"ur Postdocs)
through the project
``Mehrskalenmethoden für Wellenausbreitung in heterogenen Materialien und
 Metamaterialien''.

\end{document}